\documentclass[10pt,reqno]{amsart}
\usepackage{floatflt}
\usepackage{xypic}
\usepackage{rotating}
\usepackage{bbm}
\usepackage{mathrsfs}
\usepackage{diagbox}
\usepackage{dutchcal}
\usepackage{extarrows}
\usepackage{cite}
\usepackage{amsfonts} 
\usepackage[dvipsnames,usenames]{color}
\usepackage{graphicx,latexsym,bm,amsmath,amssymb,verbatim,multicol,lscape}
\makeatletter
\@namedef{subjclassname@2020}{
\textup{2020} Mathematics Subject Classification}
\makeatother
\newtheorem{theorem}{Theorem} [section]

\newtheorem{conjecture}[theorem]{Conjecture}
\newtheorem{lemma}[theorem]{Lemma}

\numberwithin{equation}{section}

\def\bew{\begin{widetext}}
\def\eew{\end{widetext}}
\def\be{\begin{equation}}
\def\ee{\end{equation}}
\def\bea{\begin{eqnarray}}
\def\eea{\end{eqnarray}}

\allowdisplaybreaks
\begin{document}
\title[Divisibility among power GCD matrices and power LCM matrices]
{Gcd-closed sets and divisibility among power GCD matrices and power LCM matrices}

\begin{abstract}
Let $a, b$ and $n$ be positive integers and let $S=\{x_1, \cdots, x_n\}$
be a set of $n$ distinct positive integers. We denote by $(S^a)$
(resp. $[S^a]$) the $n\times n$ matrix having the $a$th power of the
greatest common divisor (resp. the least common multiple) of $x_i$ and
$x_j$ as its $(i,j)$-entry. In 1875, Smith published his famous theorem
stating that if $S$ is an
FC set, then $\det(S^a)=\prod_{k=1}^nJ_a(x_k)$, where $J_a$ is the Jordan's
totient function. In 1993, Bourque and Ligh gave an explicit formula
for $\det[S^a]$ when $S$ is gcd closed (i.e. $\gcd(x_i, x_j)\in S$
for all integers $i$ and $j$ with $1\le i,j\le n$), and showed in 1995 that
the $a$th power GCD matrix $(S^a)$ divides the $a$th power LCM matrix
$[S^a]$ (written as $(S^a)|[S^a]$) in the ring $M_n(\mathbb Z)$ of
$n\times n$ matrices over the integers when $S$ is FC. In 2002,
Hong proved that such factorization is no longer true when $S$ is gcd closed.
In 2008 and 2026,
Hong showed that $(S^a)|(S^b)$ and $(S^a)|[S^b]$ if $S$ is a divisor chain
and an FC set, respectively. For any $x\in S$, let $G_{S}(x)
:=\{d\in S: d<x, d|x \ {\rm and} \ (d|y|x, y\in S)\Rightarrow y\in \{d,x\}\}$.
Hong proved in 2026 that any FC set is a gcd-closed set satisfying the
condition $\mathcal{G}$ (i.e., for any element $x\in S$, either $G_S(x)$
contains at most one element, or $G_S(x)$ contains at least two elements
and satisfies that $[y_1,y_2]=x$ and $(y_1,y_2)\in G_S(y_1)\cap G_S(y_2)$
for any $\{y_1,y_2\}\subseteq G_S(x)$). In this paper, we first establish
some interesting and important arithmetic properties of gcd-closed sets
satisfying the condition $\mathcal{G}$, and then make use of these results
to show that $(S^a)|(S^b)$ and $(S^a)|[S^b]$ when $a|b$ and $S$ is a gcd-closed
set satisfying the condition $\mathcal{G}$. This confirms a conjecture of
Hong proposed in [S.F. Hong, Divisibility among power GCD matrices and
power LCM matrices, {\it Bull. Aust. Math. Soc.} {\bf 113} (2026), 231-243].
\end{abstract}

\author[G.Y. Zhu]{Guangyan Zhu}
\address{School of Mathematics and Statistics,
Hubei Minzu University, Enshi 445000, P.R. China}
\email{2009043@hbmzu.edu.cn}
\thanks{The research was supported in part by the Startup
Research Fund of Hubei Minzu University for Doctoral Scholars
(Grant No. BS25008)}
\keywords{Divisibility, power GCD matrix, power LCM matrix,
greatest-type divisor, gcd-closed set, condition $\mathcal{G}$.}
\subjclass[2020]{Primary 11C20; Secondary 11A05, 15B36}
\maketitle
		
\section{Introduction}
For arbitrary integers $x$ and $y$, we denote by $(x, y)$ (resp. $[x, y]$)
their greatest common divisor (resp. least common multiple).
Let $\mathbb Z$ denote the ring of integers and $\mathbb Z^+$
the set of positive integers.
Let $|T|$ stand for the cardinality of a finite set $T$ of integers.
Let $f$ be an arithmetic function and let $S=\{x_1, \cdots, x_n\}$
be a set of $n$ distinct positive integers. Let $(f(x_i, x_j))$
(abbreviated by $(f(S))$) denote the $n\times n$ matrix having $f$
evaluated at the greatest common divisor $(x_i, x_j)$ of $x_i$ and
$x_j$ as its $(i,j)$-entry. Let $(f[x_i, x_j])$ (abbreviated by
$(f[S])$) denote the $n\times n$ matrix having $f$ evaluated at the
least common multiple $[x_i, x_j]$ of $x_i$ and $x_j$ as its
$(i,j)$-entry. Let $\xi_a$ be the arithmetic function defined by $\xi_a(x)=x^a$
for any positive integer $x$, where $a$ is a positive integer.
The $n\times n$ matrix $(\xi_a(x_i, x_j))$ (abbreviated by $(S^a)$)
and $(\xi_b[x_i, x_j])$ (abbreviated by $[S^b]$) are called {\it power GCD matrix}
on $S$ and {\it power LCM matrix} on $S$, respectively. The set $S$ is called
{\it factor closed} (FC) if $d\in S$ whenever $x\in S$ and $d|x$. In 1875,
Smith {\cite{[S-PLMS1875]}} published his famous theorem by showing that if
$S$ is an FC set, then $\det(S^a)=\prod_{k=1}^nJ_a(x_k),$
where $J_a:=\xi_a*\mu$ is the Jordan's totient function,
$\mu$ is the M\"{o}bius function and $\xi_a*\mu$ is the Dirichlet
convolution of $\xi_a$ and $\mu$. Since then, lots of
generalizations of Smith's determinant and related results
have been published (see, for example, \cite{[A-PJM1972]}-\cite{[NZ-1980]}
and \cite{[Y-RJ2010]}-\cite{[ZY-2025]}).

We say that the set $S$ is {\it gcd closed} if $(x_i, x_j)\in S$
for all $1\le i,j\le n$. Evidently, any FC set is gcd closed
but not conversely. In 1993, Bourque and Ligh \cite{[BL-LMA1993]}
extended Smith's theorem and the Beslin-Ligh result \cite{[BL-BAMS1989]}
by showing that if $S$ is gcd closed, then
$\det(S^a)=\prod_{k=1}^n \alpha_{\xi_a}(x_k)$, where
\begin{equation}\label{eq 1.1}
\alpha_{\xi_a}(x_k):=\sum_{d|x_k\atop d\nmid x_t,x_t<x_k}(\xi_a*\mu)(d).
\end{equation}
Another interesting and remarkable result was obtained in \cite{[HHL-AMH2016]}
and states that if $n\ge 2$ and $S=\{2,3,\cdots,n\}$, then
$$
\det(S^a)=\Big(\prod_{k=1}^nJ_a(k)\Big)
\sum_{t=1\atop t\ {\tiny\hbox{is squarefree}}}^n\frac{1}{J_a(t)},
$$
where a positive integer is called {\it squarefree} if it is
divisible by no other perfect square than 1. The first ten
squarefree integers are given as follows:
$1,2,3,5,6,7,10,11,13,14$.

Divisibility is the most important topic in the field of
Smith matrices. Bourque and Ligh \cite{[BL-LAA1992]} showed that
if $S$ is FC, then $(S)$ divides $[S]$ (written as $(S)|[S]$)
in the ring $M_n(\mathbb Z)$ of $n\times n$ matrices over the integers.
That is, there exists an
$A \in M_n(\mathbb Z)$ such that $[S]=(S)A$ or $[S]=A(S)$. Hong
\cite{[H-LAA2002]} showed that such a factorization is no longer
true in general if $S$ is gcd closed. Let $x,y \in S$ with
$y<x$. If $y\mid x$ and the conditions $y\mid d\mid x$ and $d\in S$
imply that $d\in \{x,y\}$, then we say that $y$ is
a {\it greatest-type divisor} of $x$ in $S$. For $x\in S$,
$G_S(x)$ stands for the set of all greatest-type divisors
of $x$ in $S$. The concept of greatest-type divisor
was introduced by Hong and played a key role in his
solution \cite{[H-AMC1996],[H-JA1999]} of the Bourque-Ligh conjecture
\cite{[BL-LAA1992]}. By \cite{[H-LAA2002]}, we know that there are
gcd-closed sets $S$ with $\mathop {\max}_{x \in S}
\left\{ {\left| {G_S\left(x\right)}\right|}\right\}=2$
such that $(S)^{-1}[S]\notin M_n(\mathbb Z)$.

In 2008, Hong \cite{[H-LAA2008]} initiated the study of the divisibility
properties among power GCD matrices and among power LCM matrices by showing
that if $a|b$ and $S$ is a divisor chain, then $(S^a)|(S^b)$ and $(S^a)|[S^b]$
in the ring $M_{n}(\mathbb Z)$. In 2022, Zhu \cite{[Z-IJNT2022]} showed that
the same divisibility results are true when $a|b$ and $S$ is a gcd-closed set
with $\max_{x\in S}\{|G_S (x)|\}=1$. In 2026, Hong \cite{[H-BAMS25]} established
the same results when $a|b$ and $S$ is FC. As in \cite{[H-BAMS25]}, for any
set $S$ of positive integers and for any $x\in S$ with $|G_S(x)|\ge 2$,
we say that the two distinct greatest-type
divisors $y_1$ and $y_2$ of $x$ in $S$ {\it satisfy the condition
$\mathcal{G}$} if $[y_1,y_2]=x$ and $(y_1,y_2)\in G_S(y_1)\cap G_S(y_2)$,
and we say that $x$ {\it satisfies the condition $\mathcal{G}$} if any two
distinct greatest-type divisors of $x$ in $S$ satisfy the condition $\mathcal{G}$.
Moreover, we say that a set $S$ of positive integers {\it satisfies the
condition $\mathcal{G}$} if any element $x$ in $S$ satisfies that either
$|G_S(x)|\le 1$, or $|G_S(x)|\ge 2$ and $x$ satisfies the condition $\mathcal{G}$.
In \cite{[H-BAMS25]}, Hong showed that any FC set is a gcd-closed
set satisfying the condition $\mathcal{G}$. Furthermore, Hong proposed
the following interesting conjecture.

\begin{conjecture}{\rm {\cite[Conjecture 3.4]{[H-BAMS25]}}} \label{p1.1}
Let $a$ and $b$ be positive integers with $a|b$ and let $S$ be
a gcd-closed set satisfying the condition $\mathcal G$. Then
$(S^a)|(S^b)$ and $(S^a)|[S^b]$ in the ring $M_{|S|}(\mathbb Z)$.
\end{conjecture}

For the case $\max_{x\in S}\{|G_S (x)|\}=1$, by Zhu's theorem
\cite{[Z-IJNT2022]} we know that Conjecture \ref{p1.1} is true.
For the case $\max_{x\in S}\{|G_S (x)|\}=2$, by \cite{[WZ]}
one knows that Conjecture \ref{p1.1} holds. It was remarked that
Wan and Zhu \cite{[WZ]} showed the existence of gcd-closed sets $S$
with $\max_{x\in S}\{|G_{S}(x)|\}=2$ and the condition $\mathcal{G}$
not being satisfied and infinitely many integers $b\ge 2$
such that $(S)\mid (S^b)$ (resp. $(S)\mid [S^b]$ and $[S]\mid [S^b]$).
In \cite{[ZLW]}, Zhu, Luo and Wan confirmed Conjecture \ref{p1.1}
for the case $\max_{x\in S}\{|G_S (x)|\}=3$. However, Conjecture
\ref{p1.1} still remains open for the general gcd-closed set $S$
satisfying the condition $\mathcal{G}$.

In this paper, our main goal is to study the divisibility among
power GCD matrices and power LCM matrices. We introduce a new
method to give a unified treatment of Conjecture \ref{p1.1}.
Actually, we will exploit arithmetic properties of
gcd-closed sets satisfying the condition $\mathcal{G}$.
Then we make use of these results to show that
Conjecture \ref{p1.1} is true. The main result of this paper
can be stated as follows.

\begin{theorem}\label{theorem 1.3}
Let $S$ be a gcd-closed set satisfying the condition
$\mathcal{G}$  and let $a$ and $b$ be positive integers
such that $a|b$. Then the $a$th power GCD matrix $(S^a)$
divides both the $b$th power GCD matrix $(S^b)$
and the $b$th power LCM matrix $[S^b]$ in the ring
$M_{|S|}(\mathbb Z)$.
\end{theorem}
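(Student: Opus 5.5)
We reduce divisibility to integrality of explicit quotient entries. Order $S=\{x_1<\cdots<x_n\}$. Because $S$ is gcd closed, the Bourque--Ligh factorization gives $(S^a)=E\,\Delta_a\,E^T$. Here $E=(e_{ij})$ is the $0$--$1$ incidence matrix with $e_{ij}=1$ iff $x_j|x_i$, and $\Delta_a=\mathrm{diag}(\alpha_{\xi_a}(x_1),\dots,\alpha_{\xi_a}(x_n))$ with $\alpha_{\xi_a}$ as in (\ref{eq 1.1}). Since $E$ is unitriangular, $E^{-1}\in M_n(\mathbb Z)$, and it is given by a Möbius function on the poset $(S,|)$. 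Hence
\[
(S^a)^{-1}(S^b)=(E^T)^{-1}\Delta_a^{-1}E^{-1}(S^b),\qquad (S^a)^{-1}[S^b]=(E^T)^{-1}\Delta_a^{-1}E^{-1}[S^b].
\]
It therefore suffices to show that every entry of $E^{-1}(S^b)$ and of $E^{-1}[S^b]$ in row $k$ is divisible by $\alpha_{\xi_a}(x_k)$. Writing $\mu_S$ for the Möbius function of $S$, these entries are
\[
\sum_{z\in S,\,z|x_k}\mu_S(z,x_k)\,(z,y)^b\quad\text{and}\quad \sum_{z\in S,\, z|x_k}\mu_S(z,x_k)\,[z,y]^b ,\qquad y\in S .
\]

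The core of the argument is an arithmetic description of $\mu_S(\cdot,x)$ and of $\alpha_{\xi_a}(x)$ when $S$ satisfies condition $\mathcal G$. Let $G_S(x)=\{y_1,\dots,y_m\}$. We will prove by induction on $x$ that for every nonempty $J\subseteq\{1,\dots,m\}$ the gcd $y_J:=\gcd_{j\in J}y_j$ lies in $S$, that $[y_J,y_{J'}]=y_{J\cap J'}$, and that the interval $\{z\in S: y_{\{1..m\}}|z|x\}$ is a Boolean lattice isomorphic to the subsets of $\{1,\dots,m\}$. The key input is that condition $\mathcal G$ at $x$ and at each $y_i$ propagates: $(y_i,y_j)\in G_S(y_i)$ and $[y_i,y_j]=x$. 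The Boolean structure gives $\mu_S(z,x)=(-1)^{|J|}$ for $z=y_J$ and $\mu_S(z,x)=0$ otherwise. By inclusion--exclusion it also gives
\[
\alpha_{\xi_a}(x)=\sum_{J}(-1)^{|J|}y_J^a .
\]
The pairwise lcm relation implies that the quotients $x/y_j$ are pairwise coprime, and that $y_J=x/\prod_{j\in J}(x/y_j)$. Setting $p_j=x/y_j$ and $w=y_{\{1..m\}}$, we obtain
\[
\alpha_{\xi_a}(x)=w^a\prod_{j=1}^m(p_j^a-1).
\]
We expect establishing this Boolean/coprime structure to be the main obstacle. The delicate points are showing that no element of $S$ lies strictly between the $y_J$'s, and checking that the $\mathcal G$ condition at lower levels really forces $y_J\in G_S(y_{J'})$ for $J=J'\cup\{j\}$. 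We would first try a strong induction on $|J|$, using gcd closure together with $(y_i,y_j)\in G_S(y_i)\cap G_S(y_j)$ repeatedly.

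With this product formula, divisibility becomes elementary. For fixed $y$, write $(y_J,y)^b$ multiplicatively over the pairwise coprime parts $p_j$. The alternating sum then factors as $(w,y)^b\prod_j\bigl(c_j^b-d_j^b\bigr)$-type expressions, where each factor is either $0$ or of the form $u^b(p_j^b-1)$ up to suitable scaling; in particular $p_j|y$-type case distinctions yield a factor $p_j^b-1$ or $0$. Since $a|b$, we have $p_j^a-1\mid p_j^b-1$, and $w^a$ divides the corresponding power of $w$. So $\alpha_{\xi_a}(x)$ divides each entry. The LCM case is handled in the same way using $[z,y]^b=z^by^b/(z,y)^b$: the alternating sum again factors over the $p_j$, this time with factors like $p_j^b-1$ or $p_j^b-p_j^{b}\cdot(\text{unit})$, and we must make sure that $w^a$ still divides it, using $w|[y_J,y]$. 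Coprimality of the $p_j$ makes the multiplicative splitting legitimate. This yields $(S^a)|(S^b)$ and $(S^a)|[S^b]$.
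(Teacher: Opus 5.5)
Your reduction is the same as the paper's. Divisibility of row $k$ of $E^{-1}(S^b)$ and $E^{-1}[S^b]$ by $\alpha_{\xi_a}(x_k)$ is exactly integrality of $f(l,m)$ and $g(l,m)$ in Lemma \ref{lemma 4.2}. The local facts you list at $x$ are correct, and easier than you fear. Both $(p_i,p_j)=1$ and $y_J=x/\prod_{j\in J}p_j$ follow from $[y_i,y_j]=x$ by comparing valuations. The M\"obius values need only that the $y_J$ are distinct, not that nothing else lies in $[w,x]$.

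The genuine gap is the step you call elementary, in the LCM case. Put $d=(y,x)\in S$ and group primes by the $p_j$ they divide. Then the LCM sum is $y^bB_0^b\prod_jB_j^b(r_j^b-1)$, where $r_j\mid p_j$ records how much of the $p_j$-primary part of $x$ is missing from $d$. Precisely, $r_j=p_j$ iff $d\mid y_j$, and $r_j=1$ iff $[d,y_j]=x$; ``$p_j\mid y$'' is not the relevant dichotomy. Nothing you establish rules out $1<r_j<p_j$, and then $p_j^a-1\nmid r_j^b-1$ in general.

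This really happens with only the structure you use. Take $S=\{1,2,7,28,35,105,140,420\}$:
\begin{itemize}
\item $S$ is gcd closed and $G_S(420)=\{105,140\}$.
\item Condition $\mathcal G$ holds at $420$, $105$ and $140$; it fails only at $28$, where $G_S(28)=\{2,7\}$.
\item The interval $\{z\in S:35\mid z\mid 420\}$ is Boolean, with $w=35$, $p_1=4$, $p_2=3$.
\end{itemize}
Yet for $y=2$ the row-$420$ entry is $420-210-140+70=140$ (here $r_1=2$). This is not divisible by $\alpha_{\xi_1}(420)=210$.

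The missing ingredient is the paper's chain lemma (Lemma \ref{lemma '''}, which drives Lemma \ref{lemma 2.5}). It states: if $d\in S$, $d\mid x$, $y_j\in G_S(x)$ and $d\nmid y_j$, then $[d,y_j]=x$. To prove it, take a chain $d=z_e,\dots,z_1,z_0=x$ with $z_i\in G_S(z_{i-1})$, and put $d_0=y_j$, $d_i=(z_i,d_{i-1})$. At each $z_{i-1}$, apply condition $\mathcal G$ to the pair $z_i,d_{i-1}\in G_S(z_{i-1})$; they are distinct since $d\mid z_i$ but $d\nmid d_{i-1}$. This gives $d_i\in G_S(z_i)$ and $[z_i,d_{i-1}]=z_{i-1}$, hence $[y_j,z_{i-1}]=[y_j,z_i]$. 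The argument uses $\mathcal G$ at every level of the chain, not just at $x$ and the $y_i$, which is exactly what the example violates.

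With this lemma, every $j$ with $d\nmid y_j$ gives $r_j=1$. So the LCM sum vanishes unless $d\mid w$, where it equals $(y/d)^bw^b\prod_j(p_j^b-1)$; this is divisible by $w^a\prod_j(p_j^a-1)$ because $a\mid b$.

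The GCD case needs less than your sketch suggests. If $d\ne x$, then $d\in S$ divides some $y_i$. The terms for $J$ and $J\triangle\{i\}$ then cancel, since $(y_{J\cup\{i\}},d)=(y_J,d)$. If $d=x$, the sum is $\alpha_{\xi_b}(x)=w^b\prod_j(p_j^b-1)$.
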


This paper is organized as follows. In Section 2, we investigate
the arithmetic properties of gcd-closed sets that satisfy the
condition $\mathcal G$. Section 3 is devoted to reducing the
coefficients $c_{ij}$ and $\alpha_{\xi_a}(x_m)$. The proof of
Theorem \ref{theorem 1.3} is presented in Section 4.

One can easily check that for any permutation $\sigma$
on the set $\{1,\cdots,n\}$, $(S^a)|(S^b)\Leftrightarrow
(S_{\sigma}^a)|(S_{\sigma} ^b)$ and $(S^a)|[S^b]\Leftrightarrow
(S_{\sigma}^a)|[S_{\sigma} ^b]$, where
$S_\sigma:=\{x_{\sigma (1)},\cdots, x_{\sigma (n)}\}$.
So without loss of any generality, we can always assume that the
set $S=\{x_1,\cdots, x_n\}$ satisfies that $x_1<\cdots <x_n$.

\section{Arithmetic properties of gcd-closed sets satisfying condition $\mathcal G$}

Throughout this section, let $x_m \in S$ with
$G_S(x_m)=\{x_{m_1},\cdots,x_{m_s}\}$ and
$1\le s\le |S|-1$. Let $1\le k\le s$ and for any
integers $i_1,\cdots,i_k$ with
$1\le i_1<\cdots<i_k\le s$, we define
$$x_{m_{i_1\cdots i_k}}:=(x_{m_{i_1}},\cdots, x_{m_{i_k}}).$$
Then $x_{m_{1\cdots s}}=(x_{m_1},\cdots, x_{m_s})$.
Define $\langle s\rangle:=\{1,\cdots,s\}$.
In what follows, we analyze the arithmetic properties
of the gcd-closed set $S$ which satisfies the condition
$\mathcal{G}$. For any $j\in\langle s\rangle$, define
$P_j:=\frac{x_m}{x_{m_j}}$. We begin with a known result
that is due to Zhu and Yu \cite{[ZY-2025]}.

\begin{lemma}\label{lemma 2.3} {\rm\cite{[ZY-2025]}}
Let $S$ be a gcd-closed set satisfying the condition
$\mathcal{G}$ and $x_m \in S$. Let $k$ be an integer
with $1 \le k \le s$. Then for any integers
$i_1,\cdots,i_k$ with $1\le i_1<\cdots<i_k\le s$, we have
$x_{m_{i_1\cdots i_k}} P_{i_1}\cdots P_{i_k}=x_m$.	
\end{lemma}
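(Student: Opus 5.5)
We argue by induction on $k$. The case $k=1$ is immediate, since $x_{m_{i_1}}P_{i_1}=x_m$ by the definition of $P_{i_1}$. If $s=1$ only $k=1$ occurs, so we may assume $s\ge 2$. Then $x_m$ satisfies the condition $\mathcal G$: for distinct $i,j$ we have $[x_{m_i},x_{m_j}]=x_m$ and $(x_{m_i},x_{m_j})\in G_S(x_{m_i})\cap G_S(x_{m_j})$.

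For $k=2$ we use $(u,v)[u,v]=uv$:
$$x_{m_{ij}}P_iP_j=\frac{x_{m_i}x_{m_j}}{x_m}\cdot\frac{x_m}{x_{m_i}}\cdot\frac{x_m}{x_{m_j}}=x_m .$$
Since $x_{m_{ij}}P_iP_j=x_m$ and $x_{m_{ij}}P_i=x_{m_j}$, this also gives
$$P_i=\frac{x_{m_j}}{x_{m_{ij}}}\quad\text{and}\quad (P_i,P_j)=1 .$$
The coprimality holds because $x_m/x_{m_{ij}}=P_iP_j$, and $x_{m_{ij}}=(x_{m_i},x_{m_j})$ forces $\bigl(x_{m_i}/x_{m_{ij}},\,x_{m_j}/x_{m_{ij}}\bigr)=(P_j,P_i)=1$.

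For the induction step, suppose $x_{m_{i_1\cdots i_{k-1}}}P_{i_1}\cdots P_{i_{k-1}}=x_m$, and put $u:=x_{m_{i_1\cdots i_{k-1}}}$ and $v:=x_{m_{i_k}}$. Then $x_{m_{i_1\cdots i_k}}=(u,v)$, and the claim reduces to
$$(u,v)=\frac{x_m}{P_{i_1}\cdots P_{i_k}}=\frac{u}{P_{i_k}},$$
that is, to $[u,v]=uv/(u,v)=vP_{i_k}=x_m$.

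First, $u/P_{i_k}$ divides both $u$ and $v$. It divides $u$ because $P_{i_k}\mid u$: indeed $P_{i_k}$ is coprime to each $P_{i_j}$ with $j<k$, and $P_{i_k}\mid x_m=uP_{i_1}\cdots P_{i_{k-1}}$. It divides $v$ because $u/P_{i_k}=x_m/(P_{i_1}\cdots P_{i_k})$ and $v=x_m/P_{i_k}$. Hence $u/P_{i_k}\mid (u,v)$.

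Conversely, we must show $[u,v]=x_m$. Clearly $[u,v]\mid x_m$. For each prime $p\mid P_{i_k}$, the $p$-adic valuations satisfy
$$v_p(u)=v_p(x_m),$$
again because $P_{i_k}$ is coprime to $P_{i_1}\cdots P_{i_{k-1}}$. For a prime $p\nmid P_{i_k}$ we have $v_p(v)=v_p(x_m)$. So in either case $v_p([u,v])=v_p(x_m)$, and therefore $[u,v]=x_m$. This completes the induction.

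The key step, and the only place where the hypothesis is genuinely used, is the pairwise coprimality of the $P_i$. It comes from $[x_{m_i},x_{m_j}]=x_m$ in the condition $\mathcal G$. The part of $\mathcal G$ requiring $(x_{m_i},x_{m_j})\in G_S(x_{m_i})\cap G_S(x_{m_j})$ is not needed here; gcd-closedness merely guarantees that the gcds lie in $S$.
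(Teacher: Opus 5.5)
Your proof is correct. The paper does not prove this lemma itself; it quotes it from Zhu and Yu, so there is no in-paper argument to match, and your write-up is a complete, self-contained proof.

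Your route has three steps:
\begin{itemize}
\item The lcm clause of $\mathcal G$ gives the case $k=2$ via $(u,v)[u,v]=uv$.
\item Reading off $P_i=x_{m_j}/x_{m_{ij}}$ and $P_j=x_{m_i}/x_{m_{ij}}$ gives $(P_i,P_j)=1$ at once.
\item The induction step is a prime-by-prime check that $[x_{m_{i_1\cdots i_{k-1}}},x_{m_{i_k}}]=x_m$.
\end{itemize}
Two things are worth noticing. First, this makes the paper's Lemma~\ref{lemma 2.2}, which the paper proves separately by a longer chain of lcm manipulations, a one-line corollary of the case $k=2$. Second, it shows the identity uses only $x_{m_j}\mid x_m$ and $[x_{m_i},x_{m_j}]=x_m$ for $i\ne j$. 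Neither gcd-closedness nor the clause $(y_1,y_2)\in G_S(y_1)\cap G_S(y_2)$ plays any role.

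Two small remarks. The half of your induction step showing $u/P_{i_k}\mid (u,v)$ is redundant: once $[u,v]=x_m=vP_{i_k}$, the identity $(u,v)[u,v]=uv$ already gives $(u,v)=u/P_{i_k}$. The induction can also be bypassed entirely. Since the $P_{i_j}$ are pairwise coprime, for each prime $p$ at most one of them is divisible by $p$. Hence $v_p(x_{m_{i_1\cdots i_k}})=\min_j\bigl(v_p(x_m)-v_p(P_{i_j})\bigr)=v_p(x_m)-\max_j v_p(P_{i_j})=v_p(x_m)-\sum_j v_p(P_{i_j})$, which is exactly the claim.
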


\begin{lemma}\label{lemma1}
Let $S$ be a gcd-closed set satisfying the condition
$\mathcal{G}$ with $x_m \in S$. Let $|G_S(x_m)|\ge 2$
and $\{x_{m_{i_1}},\cdots,x_{m_{i_k}}\}
\subseteq G_S(x_m)$ with $1\le k\le |G_S(x_m)|-1$. Then
$x_{m_{i_1\cdots i_kj}}\in G_S(x_{m_{i_1\cdots i_k}})$
for any $x_{m_j}\in G_S(x_m)
\setminus \{x_{m_{i_1}},\cdots, x_{m_{i_k}}\}$.
\end{lemma}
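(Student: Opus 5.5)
The plan is to prove the statement by induction on $k$, using the condition $\mathcal G$ at the element $x_{m_{i_1\cdots i_{k-1}}}$ rather than at $x_m$. The idea is that $x_{m_{i_1\cdots i_k j}}$ is the gcd of two distinct greatest-type divisors of $x_{m_{i_1\cdots i_{k-1}}}$.

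\emph{Base case $k=1$.} Here $x_{m_{i_1}}$ and $x_{m_j}$ are two distinct elements of $G_S(x_m)$, so $|G_S(x_m)|\ge 2$. Since $S$ satisfies the condition $\mathcal G$, the pair $x_{m_{i_1}},x_{m_j}$ satisfies the condition $\mathcal G$. In particular $(x_{m_{i_1}},x_{m_j})\in G_S(x_{m_{i_1}})$, that is, $x_{m_{i_1 j}}\in G_S(x_{m_{i_1}})$.

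\emph{A preliminary fact.} The elements $P_1,\dots,P_s$ are pairwise coprime and all exceed $1$. Indeed, for $i\neq l$ the condition $\mathcal G$ gives $[x_{m_i},x_{m_l}]=x_m$. On the other hand,
$$[x_{m_i},x_{m_l}]=[x_m/P_i,\,x_m/P_l]=x_m/(P_i,P_l),$$
so $(P_i,P_l)=1$. Also $P_i>1$ because $x_{m_i}<x_m$. The same coprimality can be read off from Lemma \ref{lemma 2.3} with $k=2$. The consequence we need is that $P_i\ne P_l$ whenever $i\ne l$.

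\emph{Inductive step.} Let $2\le k\le s-1$ and assume the lemma for $k-1$. Put $d':=x_{m_{i_1\cdots i_{k-1}}}$. Apply the induction hypothesis to the $(k-1)$-subset $\{x_{m_{i_1}},\dots,x_{m_{i_{k-1}}}\}$, once with the extra index $i_k$ and once with the extra index $j$; both lie outside this subset. This gives
$$d:=(d',x_{m_{i_k}})=x_{m_{i_1\cdots i_k}}\in G_S(d') \quad\hbox{and}\quad e':=(d',x_{m_j})=x_{m_{i_1\cdots i_{k-1}j}}\in G_S(d').$$

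Next we check that $d\ne e'$. By Lemma \ref{lemma 2.3}, $d'P_{i_1}\cdots P_{i_{k-1}}=x_m$, $dP_{i_1}\cdots P_{i_k}=x_m$ and $e'P_{i_1}\cdots P_{i_{k-1}}P_j=x_m$. Hence $d'/d=P_{i_k}$ and $d'/e'=P_j$. These differ by the preliminary fact, since $j\ne i_k$. So $|G_S(d')|\ge 2$.

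Since $S$ satisfies the condition $\mathcal G$, the element $d'$ satisfies it. Applied to the pair $d,e'$ this yields $(d,e')\in G_S(d)$. Finally,
$$(d,e')=(x_{m_{i_1}},\dots,x_{m_{i_k}},x_{m_{i_1}},\dots,x_{m_{i_{k-1}}},x_{m_j})=x_{m_{i_1\cdots i_k j}}.$$
Therefore $x_{m_{i_1\cdots i_k j}}\in G_S(x_{m_{i_1\cdots i_k}})$, which completes the induction.

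The main obstacle is seeing that one should not try to exclude an intermediate element $z$ with $x_{m_{i_1\cdots i_kj}}\mid z\mid x_{m_{i_1\cdots i_k}}$ directly; that is hard because $S$ is only gcd closed, not lcm closed. Instead one transfers the condition $\mathcal G$ down to $d'$. The only delicate point in doing so is the distinctness $d\ne e'$, which is supplied by Lemma \ref{lemma 2.3} together with the coprimality of the $P_i$.
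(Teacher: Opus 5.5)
Your proof is correct and follows the paper's argument: induction on $k$, applying the inductive hypothesis twice to obtain the two greatest-type divisors $x_{m_{i_1\cdots i_k}}$ and $x_{m_{i_1\cdots i_{k-1}j}}$ of $x_{m_{i_1\cdots i_{k-1}}}$, and then invoking the condition $\mathcal G$ at that element. Your explicit check via Lemma \ref{lemma 2.3} that these two divisors are distinct is genuinely needed, and the paper's proof leaves it tacit; for that check, $P_{i_k}\ne P_j$ already follows from $x_{m_{i_k}}\ne x_{m_j}$, so the coprimality of the $P_i$ is more than you need.
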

\begin{proof}
We prove Lemma \ref{lemma1} by using induction on $k$.
Let $k=1$. Since $x_m\in S$ satisfies the condition $\mathcal G$
and $\{x_{m_{i_1}},x_{m_j}\}\subseteq G_S(x_m)$
for any $x_{m_j}\in G_S(x_m)\setminus \{x_{m_{i_1}}\}$, we have
\begin{align}\label{q1}
x_{m_{i_1}j}=(x_{m_{i_1}},x_{m_j})\in G_S(x_{m_{i_1}}).
\end{align}
as desired. The statement is true in this case.

Let $k=2$. Since $x_m$ satisfies the condition $\mathcal G$,
by \eqref{q1} we get that $x_{m_{i_1i_2}}\in G_S(x_{m_{i_1}})$
and $x_{m_{i_1}j}\in G_S(x_{m_{i_1}})$
for any $x_{m_j}\in G_S(x_m)\setminus \{x_{m_{i_1}},x_{m_{i_2}}\}$.
It then follows that $x_{m_{i_1i_2j}}=(x_{m_{i_1i_2}}, x_{m_{i_1}j})
\in G_S(x_{m_{i_1i_2}})$ since $x_{m_{i_1}}\in S$ satisfies
the condition $\mathcal G$. The statement holds in this case.

Assume that the statement is true for the $k$ case. Let
$\{x_{m_{i_1}},\cdots,x_{m_{i_k}},x_{m_{i_{k+1}}}\}
\subseteq G_S(x_m)$. Pick an arbitrary elment $x_{m_j}\in G_S(x_m)
\setminus \{x_{m_{i_1}},\cdots, x_{m_{i_k}},x_{m_{i_{k+1}}}\}$.
Then $\{x_{m_{i_{k+1}}},x_{m_j}\}\subseteq G_S(x_m)
\setminus \{x_{m_{i_1}},\cdots, x_{m_{i_k}}\}.$
Then by the inductive hypothesis, we have
$\{x_{m_{i_1\cdots i_ki_{k+1}}},x_{m_{i_1\cdots i_kj}}\}
\subseteq G_S(x_{m_{i_1\cdots i_k}})$. Since
$x_{m_{i_1\cdots i_k}}\in S$ satisfies the condition
$\mathcal G$, it follows that
$x_{m_{i_1\cdots i_ki_{k+1}j}}
=(x_{m_{i_1\cdots i_ki_{k+1}}}, x_{m_{i_1\cdots i_kj}})
\in G_S(x_{m_{i_1\cdots i_ki_{k+1}}}).$
Thus the statement holds for the $k+1$ case.

This finishes the proof of Lemma \ref{lemma1}.
\end{proof}

\begin{lemma}\label{lemma 2.2}
Let $S$ be a gcd-closed set satisfying the condition
$\mathcal{G}$ and $x_m \in S$.
Then $(P_i,P_j)=1$ for $1 \leq i\ne j \leq s$.
\end{lemma}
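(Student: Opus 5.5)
The plan is to use Lemma \ref{lemma 2.3} with $k=2$ together with the condition $\mathcal G$ at $x_m$. Fix $i\ne j$ in $\langle s\rangle$; then $|G_S(x_m)|=s\ge 2$. Since $x_{m_i},x_{m_j}\in G_S(x_m)$, the condition $\mathcal G$ gives $[x_{m_i},x_{m_j}]=x_m$. I will then reduce the coprimality of $P_i$ and $P_j$ to an elementary identity on gcd and lcm.

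\textbf{Via Lemma \ref{lemma 2.3}.} Applying Lemma \ref{lemma 2.3} with $k=2$ and indices $i<j$ (or $j<i$) yields
$$x_{m_{ij}}P_iP_j=x_m, \qquad\text{that is,}\qquad (x_{m_i},x_{m_j})\cdot\frac{x_m}{x_{m_i}}\cdot\frac{x_m}{x_{m_j}}=x_m.$$
Combining this with $(x_{m_i},x_{m_j})[x_{m_i},x_{m_j}]=x_{m_i}x_{m_j}$ is consistent but does not by itself give coprimality, so I would argue directly.

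\textbf{Direct argument.} Suppose a prime $p$ divides both $P_i$ and $P_j$. Then $v_p(x_{m_i})<v_p(x_m)$ and $v_p(x_{m_j})<v_p(x_m)$, so
$$v_p([x_{m_i},x_{m_j}])=\max\bigl(v_p(x_{m_i}),v_p(x_{m_j})\bigr)<v_p(x_m).$$
This contradicts $[x_{m_i},x_{m_j}]=x_m$, and hence $(P_i,P_j)=1$.

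\textbf{Case $s=1$.} Here there is nothing to prove, since no pair $i\ne j$ exists.

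The only point needing care is to note that condition $\mathcal G$ applies at $x_m$ precisely because $s\ge2$. No step is a real obstacle; the lcm identity from $\mathcal G$ does all the work, and Lemma \ref{lemma 2.3} is not actually needed.
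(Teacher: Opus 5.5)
Your proof is correct and follows the same route as the paper. Both arguments extract $[x_{m_i},x_{m_j}]=x_m$ from condition $\mathcal G$ at $x_m$ (valid since $s\ge 2$) and deduce coprimality from it. The only difference is that you finish with a short $p$-adic valuation argument, while the paper writes $x_{m_i}=g_ix_{m_{ij}}$, $x_{m_j}=g_jx_{m_{ij}}$ with $(g_i,g_j)=1$ and computes $(P_i,P_j)=x_{m_{ij}}x_m/(x_{m_i}x_{m_j})=1$ by explicit gcd/lcm manipulation.

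Your aside that $x_{m_{ij}}P_iP_j=x_m$ ``does not by itself give coprimality'' is slightly off. That identity is equivalent to $[x_{m_i},x_{m_j}]=x_m$, and since $(x/y_1,x/y_2)=x/[y_1,y_2]$ for divisors $y_1,y_2$ of $x$, it yields $(P_i,P_j)=1$ at once. This is exactly the identity the paper's computation uses.
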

\begin{proof}
For $1 \leq i\ne j \leq s$, let $x_{m_i}=g_ix_{m_{ij}}$	
and $x_{m_j}=g_jx_{m_{ij}}$. Since $(x_{m_i},x_{m_j})=x_{m_{ij}}$,
one has $(g_i,g_j)=1$. From the assumption that $x_m\in S$
satisfies the condition $\mathcal G$ and
$\{x_{m_i}, x_{m_j}\}\subseteq G_S(x_m)$,
one derives that $[x_{m_i}, x_{m_j}]=x_m$. This together
with the fact $(x_{m_i}, x_{m_j})[x_{m_i}, x_{m_j}]=x_{m_i}x_{m_j}$
yields $x_{m_{ij}}x_m=x_{m_i}x_{m_j}$. Thus we obtain that
\begin{align*}
(P_i,P_j)=&\Big(\frac{x_m}{x_{m_i}}, \frac{x_m}{x_{m_j}}\Big)
=\frac{\frac{x_m}{x_{m_i}}\cdot\frac{x_m}{x_{m_j}}}
{\Big[\frac{x_m}{x_{m_i}}, \frac{x_m}{x_{m_j}}\Big]}
=\frac{\frac{x_m}{x_{m_i}}\cdot\frac{x_m}{x_{m_j}}}
{\Big[\frac{x_m}{g_ix_{m_{ij}}},\frac{x_m}{g_jx_{m_{ij}}}\Big]}\\
=& \frac{g_ig_jx_{m_{ij}}\cdot\frac{x_m}{x_{m_i}}\cdot\frac{x_m}{x_{m_j}}}
{g_ig_jx_{m_{ij}}\Big[\frac{x_m}{g_ix_{m_{ij}}},\frac{x_m}{g_jx_{m_{ij}}}\Big]}
=\frac{g_ig_jx_{m_{ij}}\cdot\frac{x_m}{x_{m_i}}
\cdot\frac{x_m}{x_{m_j}}}{[g_jx_m, g_ix_m]}\\
=& \frac{g_ig_jx_{m_{ij}}\cdot\frac{x_m}{x_{m_i}}
\cdot\frac{x_m}{x_{m_j}}}{x_m[g_j, g_i]}
=\frac{g_ig_jx_{m_{ij}}\cdot\frac{x_m}{x_{m_i}}
\cdot\frac{x_m}{x_{m_j}}}{x_mg_ig_j}
=\frac{x_{m_{ij}}x_m}{x_{m_i}x_{m_j}}=1
\end{align*}
as required. So Lemma \ref{lemma 2.2} is proved.
\end{proof}
		
\begin{lemma}\label{lemma'}
Let $A,B$ and $C$ be positive integers with $C|A$. If $[\frac{A}{C},B]=A$, then $C|B$.
\end{lemma}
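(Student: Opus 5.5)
The plan is to work prime by prime with $p$-adic valuations. Since $C\mid A$, the quotient $A/C$ is a positive integer, so the hypothesis $[\frac{A}{C},B]=A$ makes sense. To show $C\mid B$, it suffices to prove that $v_p(C)\le v_p(B)$ for every prime $p$, where $v_p$ denotes the exponent of $p$ in the factorization.

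Fix a prime $p$ and write $\alpha=v_p(A)$, $\gamma=v_p(C)$, $\beta=v_p(B)$. Because $C\mid A$, we have $\gamma\le\alpha$ and $v_p(A/C)=\alpha-\gamma$. The least common multiple is taken prime by prime, so the hypothesis gives
$$\max\{\alpha-\gamma,\beta\}=\alpha .$$

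Now split into two cases.
\begin{itemize}
\item If $\gamma=0$, then $\gamma\le\beta$ holds trivially.
\item If $\gamma\ge 1$, then $\alpha-\gamma<\alpha$. The maximum above can therefore only equal $\alpha$ if $\beta=\alpha$, and then $\beta=\alpha\ge\gamma$.
\end{itemize}
In either case $v_p(C)\le v_p(B)$. Since $p$ was arbitrary, this yields $C\mid B$.

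An equivalent formulation avoids valuations: $[\frac{A}{C},B]=A$ means $B\mid A$ and $\frac{A}{(A/C,B)}\cdot\frac{B}{1}$ balances, but the valuation argument is cleaner. There is no real obstacle. The only point needing care is the case split on whether $p$ divides $C$, since for $p\mid C$ the factor $A/C$ cannot account for the full power $p^{\alpha}$ in the least common multiple, and so $B$ must supply it.
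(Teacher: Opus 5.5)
Your valuation argument is correct, but it takes a different route from the paper. The paper sets $K=A/C$ and uses the identity $[K,B]=\frac{KB}{(K,B)}$. The hypothesis $\frac{KB}{(K,B)}=CK$ then gives $B=C\,(K,B)$ at once, so $C\mid B$.

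That is a one-line global computation that never mentions primes. It also yields the exact factorization $B=C\,(A/C,B)$, which is slightly more than divisibility.

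Your prime-by-prime approach gives different extra information: $v_p(B)=v_p(A)$ for every prime $p\mid C$, meaning $B$ carries the full $p$-part of $A$ exactly where $C$ is supported. The cost is an appeal to unique factorization and a case split. The case split can be collapsed: $\max\{\alpha-\gamma,\beta\}=\alpha$ forces either $\beta=\alpha\ge\gamma$ or $\gamma=0$.

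One thing to fix is your aside about an ``equivalent formulation avoiding valuations,'' which is garbled. The quantity $\frac{A}{(A/C,B)}\cdot\frac{B}{1}$ is not $[A/C,B]$. The correct identity is $[A/C,B]=\frac{(A/C)\,B}{(A/C,B)}$, and setting this equal to $A$ is precisely the paper's proof. Either state it correctly or drop that sentence.
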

\begin{proof}
Since $C|A$, we can let $A=CK$ with $K\in \mathbb Z^+$. Hence
$$\Big[\frac{A}{C},B\Big]=[K,B]=\frac{KB}{(K,B)}=CK$$
and so $B=C(K,B)$. Therefore $C|B$. Lemma
\ref{lemma'} is proved.
\end{proof}

\begin{lemma}{\rm\cite{[ZY-2025]}}\label{lemma ''}
Let $A_1,\cdots, A_r$ and $B$ be pairwise distinct
positive integers. Then
\begin{align*}
[(A_1,\cdots,A_r),B]=([A_1,B],\cdots,[A_r,B]).
\end{align*}
\end{lemma}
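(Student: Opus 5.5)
This is the classical distributive law $\mathrm{lcm}(\gcd(A_1,\dots,A_r),B)=\gcd(\mathrm{lcm}(A_1,B),\dots,\mathrm{lcm}(A_r,B))$. The distinctness hypothesis plays no role, so we prove the identity for arbitrary positive integers $A_1,\cdots,A_r,B$. The plan is to compare $p$-adic valuations prime by prime: two positive integers are equal exactly when they have the same exponent at every prime $p$.

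Fix a prime $p$ and write $\alpha_i=v_p(A_i)$ for $1\le i\le r$ and $\beta=v_p(B)$. We use $v_p((X,Y))=\min\{v_p(X),v_p(Y)\}$ and $v_p([X,Y])=\max\{v_p(X),v_p(Y)\}$, together with their obvious extensions to several arguments. Then the left-hand side has exponent
$$\max\Big\{\min_{1\le i\le r}\alpha_i,\ \beta\Big\},$$
and the right-hand side has exponent
$$\min_{1\le i\le r}\max\{\alpha_i,\beta\}.$$

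It remains to show these two quantities agree, which is the distributivity of $\max$ over $\min$ in the totally ordered set $\mathbb Z_{\ge 0}$. We split into two cases. If $\beta\ge \min_i\alpha_i$, the left side equals $\beta$. Each term $\max\{\alpha_i,\beta\}$ is at least $\beta$, and the term with $\alpha_i$ minimal equals $\beta$, so the right side is also $\beta$. If instead $\beta<\min_i\alpha_i$, then $\max\{\alpha_i,\beta\}=\alpha_i$ for every $i$, so both sides equal $\min_i\alpha_i$.

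Since the exponents agree at every prime, the two positive integers are equal. No step poses a real obstacle; the only point needing care is the case split in the order-theoretic identity.

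Alternatively, one could induct on $r$: the case $r=2$ is the standard identity $[(A_1,A_2),B]=([A_1,B],[A_2,B])$, and the step uses $(A_1,\cdots,A_r)=((A_1,\cdots,A_{r-1}),A_r)$. The valuation argument is more direct, so we use it.
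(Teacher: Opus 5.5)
Your proof is correct. At each prime the identity reduces to $\max\{\min_i\alpha_i,\beta\}=\min_i\max\{\alpha_i,\beta\}$, and your two-case verification of this is complete. The paper gives no proof of this lemma; it only quotes it from Zhu and Yu, so there is no argument of the paper's to compare with, and your valuation argument is a complete self-contained justification.

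Your observation that the distinctness hypothesis is superfluous is right. It is also a convenient strengthening. The lemma is used only once, with $r=2$, in the proof of Lemma \ref{lemma 2.5} to obtain \eqref{eq2.3}, with $A_1=x_{m_{1\cdots kj}}$, $A_2=x_{l_{i-1}}$ and $B=x_{l_i}$. With the hypothesis as stated, one would first have to check that these three integers are pairwise distinct. They are, since $(x_l,x_m)$ divides $x_{l_{i-1}}$ and $x_{l_i}$ but not $x_{m_{1\cdots kj}}$. Your version makes that check unnecessary.
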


Now for any $x,z\in S$ with $z|x$ and $z<x$, we
define the set $M_S(z,x)$ by
$$M_S(z,x):=\{u\in S: z|u|x\}.$$
Then $z,x\in M_S(z,x)$.
We have the following result.
\begin{lemma}\label{lemma '''}
Let $S$ be a gcd-closed set satisfying the condition $\mathcal G$ and let $x\in S$ satisfy
$|G_S(x)|\ge 2$ and $y\in G_S(x)$. Let $z\in S$ be such that
$z|x, z<x$ and $z\nmid y$. If the set $\{z_e, z_{e-1},\cdots, z_1, z_0\}$ is a subset of $M_S(z,x)$ and satisfies
that $z_i\in G_S(z_{i-1})$ for all $1\le i\le e$, where $z_e:=z$, $z_0:=x$,
then for any integer $i$ with $0\le i\le e$, we have $[y,z_i]=x$.
\end{lemma}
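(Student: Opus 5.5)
We argue by induction on $i$, walking down the chain $x=z_0, z_1,\dots,z_e=z$. The base case $i=0$ is immediate: $y\mid x$ gives $[y,z_0]=[y,x]=x$. We also record that every $z_i\in M_S(z,x)$ satisfies $z\mid z_i$. Hence $z_i\nmid y$ for all $i$, since otherwise $z\mid y$, contradicting the hypothesis.

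For the inductive step, assume $[y,z_{i-1}]=x$ for some $1\le i\le e$; we want $[y,z_i]=x$. There are two cases according to how $z_{i-1}$ sits relative to $y$.

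\emph{Case $z_{i-1}=x$, i.e. $i=1$.} Here $z_1\in G_S(x)$, and $z_1\ne y$ because $z_1\nmid y$. Since $x$ satisfies the condition $\mathcal G$ and $y,z_1$ are distinct elements of $G_S(x)$, we get $[y,z_1]=x$ directly.

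\emph{Case $z_{i-1}<x$.} Put $w:=(y,z_{i-1})\in S$, using that $S$ is gcd closed. Then $w\ne z_{i-1}$, because $z_{i-1}\nmid y$. Our plan is to show that $w\in G_S(z_{i-1})$ and that $z_{i-1}$ is the lcm of a suitable pair, so that the condition $\mathcal G$ at $z_{i-1}$ applies. The cleaner route, which we would try first, is a strengthened induction: we also carry the claim $(y,z_{i-1})\in G_S(z_{i-1})$, or that $(y,z_{i-1})=z_{i-1}$ is impossible.

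Granting $w\in G_S(z_{i-1})$, we compare $z_i$ and $w$, both greatest-type divisors of $z_{i-1}$.
\begin{itemize}
\item If $z_i=w$, then $z_i\mid y$ and so $z\mid y$, a contradiction. Hence $z_i\ne w$.
\item Applying the condition $\mathcal G$ at $z_{i-1}$ gives $[z_i,w]=z_{i-1}$.
\end{itemize}
Then
\[
[y,z_i]\supseteq \text{divides? } \quad [y,z_i]=[y,w,z_i]=[y,[w,z_i]]=[y,z_{i-1}]=x,
\]
where the first equality uses $w\mid y$, and the last uses the induction hypothesis.

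To propagate the extra claim, we need $(y,z_i)\in G_S(z_i)$. Since $x$ satisfies the condition $\mathcal G$ at the top level, Lemma \ref{lemma1} gives $(y,z_1)\in G_S(z_1)$. For deeper levels we use the condition $\mathcal G$ at $z_{i-1}$: because $w,z_i\in G_S(z_{i-1})$ are distinct, $(w,z_i)\in G_S(z_i)$. Moreover $(w,z_i)=(y,z_{i-1},z_i)=(y,z_i)$, since $z_i\mid z_{i-1}$. So the extra claim propagates, and this closes the induction.

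The step we expect to be the main obstacle is exactly this propagation of $(y,z_i)\in G_S(z_i)$. It needs $w\ne z_i$, which in turn relies on $z\nmid y$, and it needs a correct base case at $i=1$ through Lemma \ref{lemma1}. Once the base case is secured, the recursion above should carry it through all levels $1\le i\le e$.
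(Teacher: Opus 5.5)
Your proof is correct and is essentially the paper's argument. The paper's recursively defined $d_i=(z_i,d_{i-1})$ is exactly your $(y,z_i)$, and it proves $d_i\in G_S(z_i)$ by the same use of condition $\mathcal G$ at $z_{i-1}$, with distinctness coming from $z\nmid y$. It then telescopes $[y,z_i]=[y,[d_i,z_{i+1}]]=[y,z_{i+1}]$ just as you do. Only cleanup is needed: delete the stray ``$\supseteq$ divides?'' fragment, and note that the base fact $(y,z_1)\in G_S(z_1)$ follows directly from condition $\mathcal G$ at $x$, so Lemma \ref{lemma1} is not needed there.
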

\begin{proof}
Set $d_0:=y$ and $d_i:=(z_i, d_{i-1})$
for $1\le i\le e$. It follows from the hypothesis that $x$
satisfies the condition
$\mathcal G$ and $\{z_1, y\}\subseteq G_S(x)$ that
\begin{align}\label{e3}
d_1=(z_1, y)\in G_S(z_1)\cap G_S(y).
\end{align}
Now we assert that
\begin{align}\label{e6}
d_i\in G_S(z_i)\cap G_S(d_{i-1})\ \hbox{for}\ 1\le i\le e.
\end{align}

If $i=1$, then by \eqref{e3} we know that \eqref{e6} is true.

If $i=2$, then from \eqref{e3} and the hypothesis that $z_2\in G_S(z_1)$ we derive
that $\{z_2, d_1\}\subseteq G_S(z_1)$. The condition $z\nmid y$ guarantees that $z_2\ne d_1$.
Otherwise, $z_2=(z_1,d_0)=(z_1,y)$ which means that $z\mid z_2\mid y$. This is impossible.
It follows that $d_2=(z_2,d_1)\in G_S(z_2)\cap G_S(d_1)$
since $z_1$ satisfies the condition
$\mathcal G$. Thus \eqref{e6} is true when $i=2$.

Assume that \eqref{e6} holds for the $i$ case, where $1\le i\le e-1$. Now we consider
the $i+1$ case. It follows from \eqref{e6} and $z_i\in G_S(z_{i-1})$ for $1\le i\le e$
that $\{z_{i+1}, d_i\}\subseteq G_S(z_i)$. Notice that $z\nmid y$ guarantees that
$d_i\ne z_{i+1}$ for $0\le i\le e-1$. Otherwise, $z_{i+1}=d_i=(z_i,d_{i-1})$ which
implies that $z_{i+1}\mid d_{i-1}$. But $d_{i-1}\mid d_{i-2}\mid \cdots \mid d_1\mid d_0=y$
and $z=z_e\mid \cdots \mid z_{i+1}$. Hence we deduce that $z\mid z_{i+1}\mid d_{i-1}\mid y$
which infers that $z\mid y$. This is a contradiction. So we must have
$d_i\ne z_{i+1}$ for $0\le i\le e-1$. Now let us continue to complete the proof of
assertion \eqref{e6}. Since $z_i$ satisfies the condition $\mathcal G$, we have
$d_{i+1}=(z_{i+1}, d_i)\in G_S(z_{i+1})\cap G_S(d_i)$ for $0\le i\le e-1$. So
\eqref{e6} is true for the $i+1$ case. Thus claim \eqref{e6} is proved.

Now for $0\le i\le e-1$, since $z_i$ satisfies
the condition $\mathcal G$ and $z_{i+1}\in G_S(z_i)$ for $1\le i\le e-1$,
from \eqref{e6}, we deduce that
\begin{align}\label{e7}
[d_{i}, z_{i+1}]=z_{i}.
\end{align}
On the other hand, from \eqref{e6}, one deduces that
\begin{align}\label{e8}
d_e|d_{e-1}|\cdots |d_1|y.
\end{align}

Consequently, for $0\le i\le e-1$, by \eqref{e7}
and \eqref{e8}, we have
$$[y,z_i]=[y,[d_{i}, z_{i+1}]]=[[y,d_{i}], z_{i+1}]=[y, z_{i+1}].$$
It then follows that
$$x=[y,x]=[y,z_0]=[y,z_1]=\cdots =[y,z_{e-1}]=[y,z_e]=[y,z]$$
as desired.

This completes the proof of Lemma \ref{lemma '''}.
\end{proof}

\begin{lemma}\label{lemma 2.5}
Let $S$ be a gcd-closed set satisfying the condition
$\mathcal{G}$ and let $x_m,x_l \in S$ with $G_S(x_m)=\{x_{m_1},\cdots, x_{m_s}\}$.
Let $1\le k\le s-1$. If $(x_l,x_m)|x_{m_{1\cdots k}}$
and $(x_l,x_m)\nmid x_{m_j} $ for all integers $j$ with
$k+1\le j\le s$, then the following statements hold:

{\rm (i). } $P_{k+1}\cdots P_s \mid (x_l,x_m)$.

{\rm (ii). } Let $\beta:=\frac{(x_l,x_m)}{P_{k+1}\cdots P_s}$.
Then $\beta\mid x_{m_{1\cdots s}}$ and
$\Big(P_{k+1}\cdots P_s,\dfrac{x_{m_{1\cdots s}}}{\beta}\Big)=1$.
\end{lemma}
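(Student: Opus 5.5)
The plan is to prove (i) by showing that each $P_j$ with $k+1\le j\le s$ divides $z:=(x_l,x_m)$, and then to get (ii) from Lemma \ref{lemma 2.3} together with a prime-by-prime comparison of exponents.

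First I record the basic facts about $z$. Since $S$ is gcd closed, $z\in S$ and $z\mid x_m$. Moreover $z<x_m$: otherwise $z=x_m$ would divide $x_{m_{1\cdots k}}<x_m$, which is absurd. Next I need a chain from $x_m$ down to $z$. Since $M_S(z,x_m)$ is finite and contains both $z$ and $x_m$, a maximal chain in it under divisibility gives elements $z_0=x_m, z_1,\cdots,z_e=z$ of $M_S(z,x_m)$ with $z_i\in G_S(z_{i-1})$ for $1\le i\le e$. Indeed, any element of $S$ strictly between $z_i$ and $z_{i-1}$ in the divisibility order also lies in $M_S(z,x_m)$, so maximality of the chain gives the greatest-type property.

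Now fix $j$ with $k+1\le j\le s$. Because $z\nmid x_{m_j}$ and $|G_S(x_m)|=s\ge 2$, Lemma \ref{lemma '''} applies with $x=x_m$ and $y=x_{m_j}$, and gives $[x_{m_j},z]=x_m$. Writing $x_{m_j}=x_m/P_j$, this reads $[\frac{x_m}{P_j},z]=x_m$. Lemma \ref{lemma'} with $A=x_m$, $C=P_j$, $B=z$ then yields $P_j\mid z$. By Lemma \ref{lemma 2.2} the $P_j$ are pairwise coprime, so $P_{k+1}\cdots P_s\mid z$. This proves (i).

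For the divisibility part of (ii), Lemma \ref{lemma 2.3} gives
\[
x_{m_{1\cdots k}}=\frac{x_m}{P_1\cdots P_k}\quad\text{and}\quad x_{m_{1\cdots s}}=\frac{x_m}{P_1\cdots P_s}.
\]
From $z\mid x_{m_{1\cdots k}}$ we get $zP_1\cdots P_k\mid x_m$, that is, $\beta P_1\cdots P_s\mid x_m$. Hence $\beta\mid x_{m_{1\cdots s}}$.

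For the coprimality, I work one prime at a time. Let $p$ be a prime dividing $P_{k+1}\cdots P_s$. Then $p\mid P_j$ for exactly one $j\ge k+1$, and by Lemma \ref{lemma 2.2} $p\nmid P_i$ for every $i\ne j$.
\begin{itemize}
\item Since $v_p(x_{m_j})<v_p(x_m)$, the identity $[x_{m_j},z]=x_m$ forces $v_p(z)=v_p(x_m)$.
\item Consequently $v_p(\beta)=v_p(z)-v_p(P_j)=v_p(x_m)-v_p(P_j)$.
\item By the displayed formula, $v_p(x_{m_{1\cdots s}})=v_p(x_m)-v_p(P_j)$ as well.
\end{itemize}
So $p\nmid x_{m_{1\cdots s}}/\beta$, which gives the required coprimality in (ii).

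The main obstacle is justifying the use of Lemma \ref{lemma '''}. One must check that the greatest-type chain from $x_m$ to $z$ exists inside $M_S(z,x_m)$, and that the hypothesis $z\nmid y$ is exactly the given $z\nmid x_{m_j}$. Once $[x_{m_j},z]=x_m$ is available, everything else is bookkeeping with Lemmas \ref{lemma 2.3}, \ref{lemma 2.2} and \ref{lemma'}.
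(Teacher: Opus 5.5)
Your proof is correct, and it takes a genuinely different and much shorter route than the paper's.

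\textbf{The paper's route.} The paper applies Lemma~\ref{lemma '''} one level down. It first disposes of the case $(x_l,x_m)=x_{m_{1\cdots k}}$ directly via Lemma~\ref{lemma 2.3}. Otherwise it builds a greatest-type chain from $x_{m_{1\cdots k}}$ down to $(x_l,x_m)$. It uses Lemma~\ref{lemma1} to know that $x_{m_{1\cdots kj}}\in G_S(x_{m_{1\cdots k}})$ for $k+1\le j\le s$, and obtains $[x_{m_{1\cdots kj}},x_{l_i}]=x_{m_{1\cdots k}}$ for every link of the chain. Then, through Lemma~\ref{lemma ''}, Lemma~\ref{lemma'} and the auxiliary quantities $w_i,c_i,v_i,u_i$, it proves $P_{k+1}\cdots P_s\mid x_{l_i}$ and $(P_{k+1}\cdots P_s,c_0/c_i)=1$ along the whole chain. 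Parts (i) and (ii) are read off at $i=e$.

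\textbf{Your route.} You apply Lemma~\ref{lemma '''} at $x=x_m$ itself with $y=x_{m_j}$. This is legitimate, since $s\ge k+1\ge 2$, $(x_l,x_m)<x_m$ and $(x_l,x_m)\nmid x_{m_j}$. It produces the single identity $[x_{m_j},(x_l,x_m)]=x_m$. Part (i) then follows from Lemma~\ref{lemma'} and Lemma~\ref{lemma 2.2}. The coprimality in (ii) follows from observing that every prime $p\mid P_j$ satisfies $v_p((x_l,x_m))=v_p(x_m)$. Hence, by Lemmas~\ref{lemma 2.3} and~\ref{lemma 2.2}, $v_p(\beta)=v_p(x_m)-v_p(P_j)=v_p(x_{m_{1\cdots s}})$.

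\textbf{Comparison.} Your version needs neither Lemma~\ref{lemma1}, nor Lemma~\ref{lemma ''}, nor the case split, because the chain from $x_m$ to $(x_l,x_m)$ exists uniformly. It also makes clear that the hypothesis $(x_l,x_m)\mid x_{m_{1\cdots k}}$ enters only through $(x_l,x_m)<x_m$ and $\beta\mid x_{m_{1\cdots s}}$. The paper's version carries extra chain-wise information, but only (i) and (ii) are used later, in Lemma~\ref{lemma 2.6}.
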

\begin{proof} We divide the proof into the following two cases.

Case 1. $(x_l,x_m) = x_{m_{1\cdots k}}$. Then by the assumption,
we have $x_{m_{1\cdots k}}\nmid x_{m_j} $
for all integers $j$ with $k+1\le j\le s$. By Lemma \ref{lemma 2.3},
we deduce that
$$x_{m_{1\cdots k}}P_1\cdots P_k=x_m=x_{m_{1\cdots s}}P_1\cdots P_s.$$
Thus
$$(x_l,x_m) = x_{m_{1\cdots k}}=\frac{x_{m_{1\cdots s}}
P_1\cdots P_kP_{k+1}\cdots P_s}{P_1\cdots P_k}
=x_{m_{1\cdots s}}P_{k+1}\cdots P_s.$$
Hence $P_{k+1}\cdots P_s \mid (x_l,x_m)$ as expected. At this
moment, one has $x_{m_{1\cdots s}}=\beta$. It then follows that
$$\Big(P_{k+1}\cdots P_s,\frac{x_{m_{1\cdots s}}}{\beta}\Big)
=(P_{k+1}\cdots P_s,1)=1$$
as desired. Parts (i) and (ii) are proved in this case.
		
Case 2. $(x_l,x_m) \mid x_{m_{1\cdots k}}$,
$(x_l,x_m)<x_{m_{1\cdots k}}$ and
$(x_l,x_m) \nmid x_{m_j} $ for all $k+1\le j\le s$.
Then $(x_l,x_m) \nmid x_{m_{1\cdots kj}}$
for all $k+1\le j\le s$. Now we define a set
$M$ as follows. If $(x_l,x_m)\in G_S(x_{m_{1\cdots k}})$,
then we define
$M:=\{(x_l,x_m), x_{m_{1\cdots k}}\}.$
If $(x_l,x_m)\notin G_S(x_{m_{1\cdots k}})$, then
we can pick an $x_{l_1}\in S$ which satisfies that
$(x_l,x_m)|x_{l_1}$ and $x_{l_1}\in G_S(x_{m_{1\cdots k}})$.
If $(x_l,x_m)\in G_S(x_{l_1})$, we define
$M:=\{(x_l,x_m), x_{l_1}, x_{m_{1\cdots k}}\}.$
If $(x_l,x_m)\notin G_S(x_{l_1})$, then we can continue
this process, and after a finite number of steps, says
$e-1$ steps $(e\ge 1)$, we obtain the set $M$ as follows:
$M:=\{x_{l_e}, x_{l_{e-1}}, \cdots , x_{l_1 }, x_{l_0}\},$
where
\begin{align}\label{II}
x_{l_e}:=(x_l,x_m),
\end{align}
\begin{align}\label{III}
x_{l_0}:=x_{m_{1\cdots k}},
\end{align}
and
$x_{l_i } \in G_S(x_{l_{i-1}})$ for $1\le i\le e$.

Since $M$ is a subset of $M_S((x_l,x_m), x_{m_{1\cdots k}})$
and satisfies that $x_{l_i }\in G_S(x_{l_{i-1}})$ for all
$1\le i\le e$, by Lemma \ref{lemma1} one has $x_{m_{1\cdots kj}}
\in G_S(x_{m_{1\cdots k}})$, and by the hypothesis, we deduce that
$(x_l,x_m) \nmid x_{m_{1\cdots kj}}$ for all $k+1\le j\le s$. So
Lemma \ref{lemma '''} applied to the set $M$ gives that
\begin{align}\label{eq2.4}
[x_{m_{1\cdots kj}}, x_{l_i}]=x_{m_{1\cdots k}}
\end{align}
for $0\le i\le e$ and $k+1\le j\le s$. Since $M$ is a divisor chain,
it follows from Lemma \ref{lemma ''} and \eqref{eq2.4} that
\begin{align}\label{eq2.3}
&[(x_{m_{1\cdots kj}}, x_{l_{i-1}}), x_{l_{i}}]
=([x_{m_{1\cdots kj}},x_{l_{i}}],[x_{l_{i-1}}, x_{l_{i}}])
=(x_{m_{1\cdots k}},x_{l_{i-1}})=x_{l_{i-1}}
\end{align}
for $1\le i\le e$ and $k+1\le j\le s$.

For $0\le i\le e$, set
\begin{align}\label{eq2.10}
w_i:=(x_{m_{1\cdots s}}, x_{l_i}).
\end{align}
Then $w_i\mid x_{m_{1\cdots s}}\mid x_{m_{1\cdots kj}}$ and $w_i\mid x_{l_i}\mid x_{l_{i-1}}$,
and so $w_i\mid (x_{m_{1\cdots kj}},x_{l_{i-1}})$. So by \eqref{eq2.3}, we have
\begin{align}\label{e2.23}
\Big[\frac{(x_{m_{1\cdots kj}}, x_{l_{i-1}})}{w_{i}}, \frac{x_{l_{i}}}{w_{i}}\Big]
=\frac{x_{l_{i-1}}}{w_{i}}
\end{align}
for $1\le i\le e$ and $k+1\le j\le s$. However, by \eqref{eq2.4}
and Lemma \ref{lemma 2.3} we get that
\begin{align}\label{e2.24}
&\frac{(x_{m_{1\cdots kj}},x_{l_{i-1}})}{w_{i}}
=\frac{x_{m_{1\cdots kj}}x_{l_{i-1}}}
{[x_{m_{1\cdots kj}}, x_{l_{i-1}}]w_{i}}=\frac{x_{m_{1\cdots kj}}x_{l_{i-1}}}
{x_{m_{1\cdots k}}w_{i}}
=\frac{\frac{x_{m_{1\cdots s}}P_1\cdots
\cdots P_sx_{l_{i-1}}}{P_1\cdots P_kP_j}}
{\frac{x_{m_{1\cdots s}}P_1\cdots P_sw_{i}}{P_1\cdots P_k}}
=\frac{x_{l_{i-1}}}{w_{i}P_j}.
\end{align}
This implies that $P_j\mid \frac{x_{l_{i-1}}}{w_{i}}$.
Putting \eqref{e2.24} into \eqref{e2.23} gives that
$$\Big[\frac{x_{l_{i-1}}}{w_{i}P_j}, \frac{x_{l_{i}}}{w_{i}}\Big]=\frac{x_{l_{i-1}}}{w_{i}}.$$
So by Lemma \ref{lemma'}, we have
\begin{align}\label{eq''}
P_j\mid \frac{x_{l_{i}}}{w_{i}}\mid x_{l_{i}}
\end{align}
for $1\le i\le e$ and $k+1\le j\le s$. By Lemma \ref{lemma 2.2}, we know that
$P_{k+1},\cdots,P_s$ are pairwise relatively prime. It then follows that
\begin{align}\label{eq2.9}
P_{k+1}\cdots P_s \mid x_{l_{i}}\ \hbox{for}\ 1\le i\le e.
\end{align}
Particularly, $P_{k+1}\cdots P_s\mid x_{l_e}=(x_l,x_m)$ as desired.
Part (i) is proved in this case.

Let us now show part (ii) for Case 2. It follows
from \eqref{III} and Lemma \ref{lemma 2.3} that
\begin{align}\label{I}
x_{l_0}=x_{m_{1\cdots k}}=x_{m_{1\cdots s}}P_{k+1}\cdots P_s.
\end{align}
This together with \eqref{eq''} yields that $P_{k+1}\cdots P_s \mid x_{l_{i}}$
for all integers $i$ with $0\le i\le e$. So for any $0\le i\le e$, we may define
the integer $c_i$ by
\begin{align}\label{eq2.15}
c_i:=\frac{x_{l_i}}{P_{k+1}\cdots P_s}.
\end{align}
Then by \eqref{I}, we have
\begin{align}\label{B}
c_0=\frac{x_{l_0}}{P_{k+1}\cdots P_s}=\frac{x_{m_{1\cdots s}}P_{k+1}\cdots P_s}{P_{k+1}\cdots P_s}
=x_{m_{1\cdots s}}.
\end{align}
It is clear from \eqref{II} that
\begin{align}\label{BB}
c_e=\frac{x_{l_e}}{P_{k+1}\cdots P_s}=\frac{(x_l,x_m)}{P_{k+1}\cdots P_s}=\beta.
\end{align}
Since $x_{l_{i} } \in G_S(x_{l_{i-1}})$
for $1\le i\le e$, by \eqref{eq2.15} one has $c_i|c_{i-1}$ and $c_i<c_{i-1}$ for $1\le i\le e$.
Furthermore, by \eqref{eq2.10} and \eqref{eq2.15}, we can deduce that
\begin{align}\label{eq2.16}
w_i&=(x_{l_{i}},c_0)
=(c_iP_{k+1}\cdots P_s, c_0)=c_i\Big(P_{k+1}\cdots P_s,\frac{c_0}{c_i}\Big)
\end{align}
for any integer $i$ with $1\le i\le e$. It follows from \eqref{eq2.15} and \eqref{eq2.16} that
\begin{align}\label{eq2.17}
\dfrac{x_{l_i}}{w_i}=&\dfrac{c_iP_{k+1}\cdots P_s}{c_i(P_{k+1}\cdots P_s,\frac{c_0}{c_i})}
=\dfrac{P_{k+1}\cdots P_s}{\frac{P_{k+1}\cdots P_s\frac{c_0}{c_i}}{[P_{k+1}\cdots P_s,\frac{c_0}{c_i}]}}
=\frac{[P_{k+1}\cdots P_s,\frac{c_0}{c_i}]}{\frac{c_0}{c_i}}.
\end{align}

On the other hand, by \eqref{eq''} and Lemma \ref{lemma 2.2}, we know that
\begin{equation}\label{eq2.29}
P_{k+1}P_{k+2}\cdots P_s\mid \frac{x_{l_{i}}}{w_{i}}
\end{equation}
for $1\le i\le e$. Let
$$[P_{k+1}P_{k+2} \cdots P_s,\frac{c_0}{c_i}]:=v_iP_{k+1}P_{k+2} \cdots P_s$$
with $v_i \in \mathbb{Z}^+$.
This together with \eqref{eq2.17} and \eqref{eq2.29} yields that
\begin{equation*}
P_{k+1}\cdots P_s \mid \frac{v_iP_{k+1}\cdots P_s}
{\frac{c_0}{\mathcal c_i}}
\end{equation*}
which implies that $\dfrac{c_0}{\mathcal c_i} \mid v_i$.
Now we let $v_i=\dfrac{c_0}{c_i}u_i$ for some
$u_i\in \mathbb{Z}^+$. Then we can derive that
\begin{align*}
&\frac{P_{k+1}\cdots P_s\frac{c_0}{c_i}}
{(P_{k+1}\cdots P_s,\frac{c_0}{c_i})}
=\Big[P_{k+1}\cdots P_s,\frac{c_0}{c_i}\Big]
=v_iP_{k+1}\cdots P_s
= \frac{c_0}{c_i}u_iP_{k+1}\cdots P_s.
\end{align*}
Then for $1\le i\le e$, one has
$$(P_{k+1}\cdots P_s,\frac{c_0}{c_i})u_i=1$$
and so
$$(P_{k+1}\cdots P_s,\frac{c_0}{c_i})=u_i=1.$$
In particular, by \eqref{B} and \eqref{BB} we have
$$\Big(P_{k+1}\cdots P_s,\frac{c_0}{c_e}\Big)
=\Big(P_{k+1}\cdots P_s,\frac{x_{m_{1\cdots s}}}{\beta}\Big)=1$$
as required.
This concludes the proof of part (ii) and that of Lemma \ref{lemma 2.5}.
\end{proof}
		
\begin{lemma}\label{lemma 2.6}
Let $S$ be a gcd-closed set satisfying the condition $\mathcal{G}$
and $x_l, x_m\in S$ with $G_S(x_m)=\{x_{m_1},\cdots,x_{m_s}\}$.
Let $1\le k\le s-1$. Assume that $(x_l,x_m)\mid x_{m_{1\cdots k}}$
and $(x_l,x_m) \nmid x_{m_j}$ for all $k+1 \le j \le s$.
Let $1 \le i_1<\cdots < i_h \le k$ with $1\le h\le k$
and $k+1 \le j_1<\cdots<j_t \le s$ with $1\le t\le s-k$.
Then each of the following is true.	
		
{\rm (i).} $(x_l, x_{m_{i_1\cdots i_h}})=(x_l, x_m).$

{\rm (ii).} $[x_l,x_{m_{j_1\cdots j_t}}]=[x_l,x_m].$

{\rm (iii).}
$[x_{l}, x_{m_{i_1\cdots i_hj_1\cdots j_t}}]
=[x_l,x_{m_{i_1\cdots i_h}}].$

{\rm (iv).} $(x_{l}, x_{m_{i_1\cdots i_hj_1\cdots j_t}})
=(x_l, x_{m_{j_1\cdots j_t}})$.
			
\end{lemma}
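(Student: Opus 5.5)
Write $g:=(x_l,x_m)$ and $Q:=P_{k+1}\cdots P_s$. I would derive all four parts from Lemma \ref{lemma 2.3} and Lemma \ref{lemma 2.5}, working one prime at a time wherever gcd/lcm identities are needed.

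For (i), note first that $g\mid x_{m_{1\cdots k}}$ and $x_{m_{1\cdots k}}\mid x_{m_{i_1\cdots i_h}}$. Hence $g\mid (x_l,x_{m_{i_1\cdots i_h}})$. Conversely, $x_{m_{i_1\cdots i_h}}\mid x_m$, so $(x_l,x_{m_{i_1\cdots i_h}})\mid (x_l,x_m)=g$. This gives equality.

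For (ii), Lemma \ref{lemma 2.3} gives $x_m=x_{m_{j_1\cdots j_t}}P_{j_1}\cdots P_{j_t}$. By Lemma \ref{lemma 2.5}(i) together with Lemma \ref{lemma 2.2}, we have $P_{j_1}\cdots P_{j_t}\mid Q\mid g\mid x_l$. Now fix a prime $p$ and write $v$ for the $p$-adic valuation. If $p$ divides no $P_{j_r}$, then $v(x_{m_{j_1\cdots j_t}})=v(x_m)$ and the two lcm's agree at $p$. If $p\mid P_{j_r}$, then $p\nmid P_{j_{r'}}$ for $r'\ne r$, so $v(x_m)=v(x_{m_{j_1\cdots j_t}})+v(P_{j_r})$. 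I then need $v(x_l)\ge v(x_m)$. Since $v(g)=\min(v(x_l),v(x_m))$, it suffices to show $v(g)=v(x_m)$. This is where Lemma \ref{lemma 2.5}(ii) enters. Write $g=\beta Q$ and $x_m=x_{m_{1\cdots s}}P_1\cdots P_s$. Because $p\mid Q$, the coprimality $(Q,x_{m_{1\cdots s}}/\beta)=1$ gives $v(\beta)=v(x_{m_{1\cdots s}})$. Moreover $p\nmid P_1\cdots P_k$, since the $P_i$ are pairwise coprime. Therefore $v(g)=v(\beta)+v(Q)=v(x_{m_{1\cdots s}})+v(P_1\cdots P_s)=v(x_m)$.

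For (iii) and (iv), the same prime-by-prime approach should work. By Lemma \ref{lemma 2.3}, $x_{m_{i_1\cdots i_hj_1\cdots j_t}}$ differs from $x_{m_{i_1\cdots i_h}}$ only at primes $p$ dividing some $P_{j_r}$. At such a prime, (ii) showed $v(x_l)\ge v(x_m)=v(x_{m_{i_1\cdots i_h}})$ (the last equality because $p\nmid P_{i_1}\cdots P_{i_h}$). So $x_l$ dominates both gcd-terms at $p$, and (iii) follows.

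For (iv), compare $x_{m_{i_1\cdots i_hj_1\cdots j_t}}$ with $x_{m_{j_1\cdots j_t}}$. These differ only at primes $p$ dividing some $P_{i_r}$, and at such a prime, $v(x_{m_{j_1\cdots j_t}})=v(x_m)$. At these primes I need $v(x_l)\le v(x_{m_{i_1\cdots i_hj_1\cdots j_t}})$. Here $p\nmid Q$, so $v(g)=v(\beta)\le v(x_{m_{1\cdots s}})$, using $\beta\mid x_{m_{1\cdots s}}$ from Lemma \ref{lemma 2.5}(ii). Also $v(x_{m_{1\cdots s}})<v(x_m)$, because $v(P_{i_r})>0$. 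Hence $v(g)<v(x_m)$, which forces $v(x_l)=v(g)\le v(x_{m_{1\cdots s}})\le v(x_{m_{i_1\cdots i_hj_1\cdots j_t}})$. So both gcd's equal $v(x_l)$ at $p$, proving (iv).

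The main obstacle is the valuation bookkeeping that uses Lemma \ref{lemma 2.5}(ii). I will handle it by splitting primes into three classes, according to whether $p$ divides one of $P_{k+1},\dots,P_s$, one of $P_1,\dots,P_k$, or none of the $P_i$. Pairwise coprimality of the $P_i$ guarantees these classes are disjoint.
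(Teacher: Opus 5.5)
Your proof is correct, and it reaches (ii)–(iv) by a different route from the paper; part (i) is identical to the paper's argument.

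The paper stays with global gcd/lcm identities. From Lemma~\ref{lemma 2.5}(ii) and Lemma~\ref{lemma 2.2} it gets
\[
\Bigl(P_{k+1}\cdots P_s,\tfrac{x_{m_{1\cdots s}}}{\beta}P_1\cdots P_k\Bigr)=1 .
\]
It uses this to compute $[(x_l,x_m),x_{m_{j_1\cdots j_t}}]=x_m$ and $[(x_l,x_m),x_{m_{i_1\cdots i_hj_1\cdots j_t}}]=x_{m_{i_1\cdots i_h}}$, and then takes the lcm with $x_l$. It obtains (iv) purely formally from (i)–(iii), the identity $(a,b)[a,b]=ab$, and Lemma~\ref{lemma 2.3}.

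You instead work one prime at a time and isolate two local facts:
\begin{itemize}
\item for $p\mid P_{k+1}\cdots P_s$, $v_p(x_l)\ge v_p((x_l,x_m))=v_p(x_m)$, which uses the coprimality half of Lemma~\ref{lemma 2.5}(ii);
\item for $p\mid P_1\cdots P_k$, $v_p(x_l)=v_p((x_l,x_m))\le v_p(x_{m_{1\cdots s}})$, which uses $\beta\mid x_{m_{1\cdots s}}$.
\end{itemize}
At every other prime, all the relevant $x_{m_{\cdots}}$ have the same valuation. With these facts, (iii) follows at once from the fact already proved for (ii), and (iv) is checked directly instead of being derived from (i)–(iii).

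Your version is shorter and shows exactly which half of Lemma~\ref{lemma 2.5}(ii) is needed at which primes. The paper's version avoids factorization and makes the intermediate lcm identities explicit, at the cost of longer chains of equalities.

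Two small points, neither affecting correctness. In (iii) you mean that $x_l$ dominates both lcm arguments at $p$, not ``gcd-terms''. Also, $P_{j_1}\cdots P_{j_t}\mid P_{k+1}\cdots P_s$ holds trivially as a sub-product and needs no appeal to Lemma~\ref{lemma 2.2}.
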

\begin{proof}
(i). Since $(x_l, x_m)\mid x_{m_{1\cdots k}}$ and $(x_l,x_m)\mid x_l$,
we have $(x_l,x_m)\mid(x_l,x_{m_{1\cdots k}})$. Since
$1\le i_1<\cdots<i_h\le k$, one has
$x_{m_{1\cdots k}}\mid x_{m_{i_1\cdots i_h}}\mid x_m$.
It then follows that
$(x_l,x_{m_{1\cdots k}})\mid(x_l,x_{m_{i_1\cdots i_h}})\mid(x_l,x_m),$
and so we must have
\begin{align*}
(x_l,x_m)=(x_l,x_{m_{1\cdots k}})=(x_l,x_{m_{i_1\cdots i_h}})
\end{align*}
as expected. Part (i) is proved.

(ii). Since $(x_l,x_m)\mid x_{m_{1\cdots k}}$ and
$(x_l,x_m) \nmid x_{m_j} $ for all $j$ with $k+1\le j\le s$,
by Lemma \ref{lemma 2.5} (i), we have
$P_{k+1}\cdots P_{s}\mid (x_l,x_m)$.
One may let
\begin{align}\label{1}
(x_l,x_m):=\beta P_{k+1}\cdots P_{s}.
\end{align}
From Lemma \ref{lemma 2.5} (ii), we obtain that
$\beta|x_{m_{12\cdots s}}$
and
\begin{equation}\label{eq44.18}
\Big(P_{k+1}\cdots P_{s}, \dfrac{x_{m_{1\cdots s}}}{\beta}\Big)=1.
\end{equation}

It is easy to see that for $k+1\le j_1<\cdots<j_t\le s$,
we have
\begin{align*}
x_{m_{(k+1)\cdots s}}\mid x_{m_{j_1\cdots j_t}} \mid x_{m}.
\end{align*}
So with Lemma \ref{lemma 2.3} applied to
$x_{m_{(k+1)\cdots s}}$, one gets that
\begin{equation}\label{eq4.10}
x_{m_{(k+1)\cdots s}}=\frac{x_{m_{1\cdots s}}P_1\cdots P_s}{P_{k+1}\cdots P_s}
=x_{m_{1\cdots s}}P_1\cdots P_k \mid x_{m_{j_1\cdots j_t}}
\mid x_{m_{1\cdots s}}P_1\cdots P_s=x_m.
\end{equation}	
From Lemma \ref{lemma 2.2}, we can deduce that $(P_{k+1}\cdots P_{s},P_1\cdots P_k)=1$.
Then by \eqref{eq44.18}, one has
\begin{align*}
(P_{k+1}\cdots P_s, \dfrac{x_{m_{1\cdots s}}}{\beta}\cdot P_1\cdots P_k)=1.
\end{align*}
It then follows that
\begin{equation}\label{eq44.10}
[P_{k+1}\cdots P_s, \dfrac{x_{m_{1\cdots s}}}{\beta}\cdot P_1\cdots P_k]
=\dfrac{x_{m_{1\cdots s}}P_1\cdots P_s}{\beta}.
\end{equation}
Thus by \eqref{1}, \eqref{eq4.10}, \eqref{eq44.10} and
Lemma \ref{lemma 2.3}, we can deduce that
\begin{align}
[(x_l,x_m),x_{m_{j_1\cdots j_t}}]
=&[\beta P_{k+1}\cdots P_s, x_{m_{j_1\cdots j_t}}]\notag\\
=&[\beta P_{k+1}\cdots P_s,[x_{m_{1\cdots s}}
P_1\cdots P_k , x_{m_{j_1\cdots j_t}}]]\notag\\
=&\Big[\beta\Big[P_{k+1}\cdots P_s, \frac{x_{m_{1\cdots s}}}
{\beta}P_1\cdots P_k\Big], x_{m_{j_1\cdots j_t}}\Big]\notag\\
=&\Big[\beta\dfrac{x_{m_{1\cdots s}}P_1\cdots P_s}{\beta},
x_{m_{j_1\cdots j_t}}\Big]\notag\\
=&[x_{m_{1\cdots s}}P_1\cdots P_s,x_{m_{j_1\cdots j_t}}]\notag\\
=&x_{m_{1\cdots s}}P_1\cdots P_s
=x_m.\label{IV}
\end{align}
Hence by \eqref{IV}, we have
$$[x_l,x_{m_{j_1\cdots j_t}}]=[[x_l,(x_l,x_m)],x_{m_{j_1\cdots j_t}}]
=[x_l,[(x_l,x_m),x_{m_{j_1\cdots j_t}}]]=[x_l,x_m ]$$
as one desires. Part (ii) is proved.
			
(iii). Since $1\le i_1<\cdots<i_h\le k$
and $k+1 \le j_1<\cdots<j_t\le s$, we have
\begin{equation*}
x_{m_{i_1\cdots i_h(k+1)\cdots s}}
\mid x_{m_{i_1\cdots i_hj_1\cdots j_t}}
\mid x_{m_{i_1\cdots i_h}}.
\end{equation*}
Then by Lemma \ref{lemma 2.3}, one has
\begin{equation}\label{eq4.8}
x_{m_{i_1...i_h (k+1)...s}}
=\frac{x_{m_{1\cdots s}}P_1\cdots P_s}
{P_{i_1}\cdots P_{i_h}P_{k+1}\cdots P_s}
=\frac{x_{m_{1\cdots s}}P_1\cdots P_k}
{P_{i_1}\cdots P_{i_h}}\Big|x_{m_{i_1\cdots i_h j_1\cdots j_t}}
\Big|\frac{x_{m_{1\cdots s}}P_1\cdots  P_s}
{P_{i_1}\cdots P_{i_h}}.
\end{equation}		
But Lemma \ref{lemma 2.2} tells us that
\begin{align}\label{V}
\Big(P_{k+1}\cdots P_{s},\dfrac{P_1\cdots P_k}
{P_{i_1}\cdots P_{i_h}}\Big)=1.
\end{align}
Hence by \eqref{eq44.18} and \eqref{V}, we have
$$\Big(P_{k+1}\cdots P_s, \frac{x_{m_{1\cdots s}}}
{\beta}\cdot\dfrac{P_1\cdots P_k}{P_{i_1}\cdots P_{i_h}}\Big)=1. $$
It follows that
\begin{equation}\label{eq44.9}
\Big[P_{k+1}\cdots P_s, \frac{x_{m_{1\cdots s}}}{\beta}\cdot
\dfrac{P_1\cdots P_k}{P_{i_1}\cdots P_{i_h}}\Big]=
\dfrac{x_{m_{1\cdots s}}P_1\cdots P_s}{\beta P_{i_1}\cdots P_{i_h}}.
\end{equation}
So by \eqref{1}, \eqref{eq4.8}, \eqref{eq44.9} and
Lemma \ref{lemma 2.3}, one has
\begin{align}
[(x_l,x_m), x_{m_{i_1\cdots i_hj_1\cdots j_t}}]=&[\beta P_{k+1}
\cdots P_s, x_{m_{i_1\cdots i_hj_1\cdots j_t}}]\notag\\
=&\Big[\beta P_{k+1}\cdots P_s, \Big[\dfrac{x_{m_{1\cdots s}}P_1\cdots P_k}
{P_{i_1}\cdots P_{i_h}},x_{m_{i_1\cdots i_hj_1\cdots j_t}}\Big]\Big] \notag\\
=&\Big[\beta\Big[P_{k+1}\cdots P_s, \dfrac{x_{m_{1\cdots s}}P_1\cdots P_k}
{\beta P_{i_1}\cdots P_{i_h}}\Big],x_{m_{i_1\cdots i_hj_1\cdots j_t}}\Big] \notag\\
=&\Big[\beta\dfrac{x_{m_{1\cdots s}}P_1\cdots P_s}{\beta P_{i_1}\cdots P_{i_h}},
x_{m_{i_1\cdots i_hj_1\cdots j_t}}\Big]\notag\\
=&\Big[\dfrac{x_{m_{1\cdots s}}P_1\cdots P_s}{P_{i_1}\cdots P_{i_h}},
x_{m_{i_1\cdots i_hj_1\cdots j_t}}\Big]\notag\\
=&\dfrac{x_{m_{1\cdots s}}P_1P_2\cdots  P_s}{P_{i_1}\cdots P_{i_h}}\notag\\
=&x_{m_{i_1\cdots i_h}}. \notag
\end{align}
It follows that
\begin{align*}
&[x_l,x_{m_{i_1\cdots i_hj_1\cdots j_t}}]
=[[x_l,(x_l,x_m)],x_{m_{i_1\cdots i_hj_1\cdots j_t}}]\\
=&[x_l,[(x_l,x_m),x_{m_{i_1\cdots i_hj_1\cdots j_t}}]]
=[x_l,x_{m_{i_1\cdots i_h}}]
\end{align*}
as expected. Part (iii) is proved.

(iv). By parts (i) to (iii), and applying again Lemma
\ref{lemma 2.3}, one arrives at
\begin{align*}
(x_l,x_{m_{i_1\cdots i_hj_1\cdots j_t}})
=&\frac{x_lx_{m_{i_1\cdots i_hj_1\cdots j_t}}}
{[x_l,x_{m_{i_1\cdots i_hj_1\cdots j_t}}]}
=\frac{x_lx_{m_{i_1\cdots i_hj_1\cdots j_t}}}
{[x_l,x_{m_{i_1\cdots i_h}}]}\\
=&\frac{x_lx_{m_{i_1\cdots i_hj_1\cdots j_t}}}
{\frac{x_lx_{m_{i_1\cdots i_h}}}{(x_l,x_{m_{i_1\cdots i_h}})}}
=\frac{x_{m_{i_1\cdots i_hj_1\cdots j_t}}
(x_l,x_{m_{i_1\cdots i_h}})}{x_{m_{i_1\cdots i_h}}}\\
=& \frac{x_{m_{i_1\cdots i_hj_1\cdots j_t}}(x_l,x_m)}
{x_{m_{i_1\cdots i_h}}}
=\frac{x_{m_{i_1\cdots i_hj_1\cdots j_t}}x_lx_m}
{x_{m_{i_1\cdots i_h}}[x_l,x_m]}\\
=& \frac{x_{m_{i_1\cdots i_hj_1\cdots j_t}}x_lx_m}
{x_{m_{i_1\cdots i_h}}[x_l,x_{m_{j_1\cdots j_t}}]}
=\frac{\frac{x_m}{P_{i_1}\cdots P_{i_h}P_{j_1}
\cdots P_{j_t}}\cdot x_lx_m}{\frac{x_m}
{P_{i_1}\cdots P_{i_h}}\cdot[x_l,x_{m_{j_1\cdots j_t}}]}\\
=& \frac{x_l\cdot\frac{x_m}{P_{j_1}\cdots P_{j_t}}}
{[x_l,x_{m_{j_1\cdots j_t}}]}
=\frac{x_lx_{m_{j_1\cdots j_t}}}
{[x_l,x_{m_{j_1\cdots j_t}}]}
=(x_l,x_{m_{j_1\cdots j_t}})
\end{align*}
as required. Part (iv) is proved.

The proof of Lemma \ref{lemma 2.6} is complete.			
\end{proof}	

\section{Reductions of $c_{ij}$ and $\alpha_{\xi_a}(x_m)$}

\begin{lemma}\label{lemma 2.7}
If $S$ is a gcd-closed set of $n$ positive integers, then the power GCD matrix $(S^a)$
is nonsingular and
\begin{align*}
((S^a)^{-1})_{ij}:=\sum_{x_i|x_k\atop x_j|x_k}
\frac{c_{i k}c_{j k}}{\alpha _{\xi_a}(x_k)}
\end{align*}
with
\begin{equation}\label{eq3.1}
c_{ij}:=\sum _{dx_i|x_j\atop dx_i\nmid x_t, x_t<x_j}\mu(d)
\end{equation}
and $\alpha_{\xi_a}(x_k)$ is defined as in \eqref{eq 1.1}.
\end{lemma}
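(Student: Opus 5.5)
We will prove Lemma \ref{lemma 2.7} by the standard factorization of a GCD matrix on a gcd-closed set, as in Bourque and Ligh \cite{[BL-LMA1993]}. The idea is to write $(S^a)=E\,\Lambda\,E^{T}$, with $E$ the $n\times n$ incidence matrix $e_{ij}=1$ if $x_j\mid x_i$ and $0$ otherwise, and $\Lambda=\mathrm{diag}(\alpha_{\xi_a}(x_1),\dots,\alpha_{\xi_a}(x_n))$. Then we show that $E^{-1}=(c_{ji})$, i.e.\ $(E^{-1})_{ki}=c_{ik}$ with $c_{ij}$ given by \eqref{eq3.1}. The inverse formula then follows at once:
$$((S^a)^{-1})_{ij}=\sum_k (E^{-1})_{ki}\,\alpha_{\xi_a}(x_k)^{-1}(E^{-1})_{kj}.$$
Since $c_{ik}=0$ unless $x_i\mid x_k$, this sum runs exactly over $k$ with $x_i\mid x_k$ and $x_j\mid x_k$.

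\emph{Step 1 (factorization).} Since $\xi_a=J_a*1$, we have $(x_i,x_j)^a=\sum_{d\mid (x_i,x_j)}J_a(d)$. Attach each divisor $d$ of any element of $S$ to the least $x_k\in S$ with $d\mid x_k$. Gcd-closedness makes this $x_k$ divide every element of $S$ divisible by $d$: if $d\mid x_t$, then $d\mid (x_k,x_t)\in S$ and $(x_k,x_t)\le x_k$, so minimality forces $(x_k,x_t)=x_k$. Hence the divisors attached to $x_k$ are precisely those $d\mid x_k$ with $d\nmid x_t$ for all $x_t<x_k$. Moreover, $d\mid(x_i,x_j)$ iff its attached $x_k$ divides both $x_i$ and $x_j$. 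Grouping the sum gives
$$(x_i,x_j)^a=\sum_{x_k\mid x_i,\ x_k\mid x_j}\alpha_{\xi_a}(x_k).$$
This is the entry identity of $E\Lambda E^T$. Since $E$ is unitriangular in the increasing ordering and, by Bourque--Ligh, $\det(S^a)=\prod_k\alpha_{\xi_a}(x_k)$, nonsingularity follows once each $\alpha_{\xi_a}(x_k)\neq 0$. We get this by noting that $(S^a)$ is a Gram-type matrix of linearly independent vectors and hence positive definite. Alternatively, $\alpha_{\xi_a}(x_k)=\sum\nolimits' J_a(d)>0$: the sum contains $d=x_k$ itself, and every $J_a(d)>0$.

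\emph{Step 2 (inverse of $E$).} This is the step needing the most care. We must verify that for $x_i\mid x_j$,
$$\sum_{x_i\mid x_k\mid x_j} c_{ik}=\delta_{ij},$$
which is the identity $(E^{-1}E)=I$ read entrywise; triangularity makes one-sided checking enough. Expanding $c_{ik}$, the sum equals $\sum_k\sum_{d}\mu(d)$ over pairs with $dx_i\mid x_k$ and $dx_i\nmid x_t$ for $x_t<x_k$. By the attachment argument of Step 1, applied to the integer $dx_i$, each $d$ with $dx_i\mid x_j$ is counted exactly once, at the least element of $S$ divisible by $dx_i$, which divides $x_j$. So the double sum collapses to $\sum_{d\mid x_j/x_i}\mu(d)=\delta_{ij}$. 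The only care needed is to check that the attached element lies in the interval between $x_i$ and $x_j$, which again uses gcd-closedness. Combining Steps 1 and 2 gives the stated entries of $(S^a)^{-1}$.
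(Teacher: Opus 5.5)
Your proof is correct. It differs from the paper only in that the paper gives no argument of its own: it takes nonsingularity from Example~1(ii) and the inverse formula from Theorem~3 (with $f=\xi_a$) of Bourque and Ligh's 1993 paper.

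You instead reconstruct the mechanism behind that citation. First you obtain the factorization $(S^a)=E\Lambda E^{T}$ by grouping $\sum_{d\mid (x_i,x_j)}J_a(d)$ according to the least element of $S$ divisible by $d$. Then you obtain the explicit inverse $(E^{-1})_{ki}=c_{ik}$ from the same ``least element of $S$ divisible by $dx_i$'' bijection, which collapses $\sum_{x_k\mid x_j}c_{ik}$ to $\sum_{d\mid x_j/x_i}\mu(d)$. Gcd-closedness enters only through the observation that this least element divides every element of $S$ divisible by the given integer.

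The citation is shorter, but your version is self-contained and gives some things directly:
\begin{itemize}
\item The determinant formula $\det(S^a)=\prod_k\alpha_{\xi_a}(x_k)$ follows from $\det E=1$, so you need not invoke Bourque--Ligh for it.
\item Your bound $\alpha_{\xi_a}(x_k)\ge J_a(x_k)>0$, together with invertibility of $E$, already shows that $E\Lambda E^{T}$ is positive definite. The vaguer ``Gram-type matrix of linearly independent vectors'' remark can therefore be dropped.
\end{itemize}

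Two small presentational points. What you actually verify is $EC^{T}=I$ with $(C^{T})_{ki}=c_{ik}$. This is a right inverse, which suffices because $E$ is square; triangularity is not needed for that step. You should also state the trivial case $x_i\nmid x_j$: there every term of $\sum_{x_k\mid x_j}c_{ik}$ vanishes, so the sum equals $0=\delta_{ij}$.
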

\begin{proof} From \cite[Example 1 (ii)]{[BL-LMA1993]},
one knows that the power GCD matrix $(S^a)$
is positive definite, and so is nonsingular. Setting
$f=\xi_a$ in \cite[Theorem 3]{[BL-LMA1993]} gives the expected result.
Lemma \ref{lemma 2.7} is proved.
\end{proof}

The following result is due to Hong gotten in 2004
which reduces greatly the formula of Bourque and Ligh.

\begin{lemma}{\rm\cite[Theorem 1.2]{[H-JA2004]}}\label{lemma 2.8}
Let $S$ be a gcd-closed set and let $f$ be a complex-valued function.
For any $x\in S$, define the function $\alpha_f$ as follows:
$$\alpha_f(x):=\sum\limits_{d|x\atop d\notin \varUpsilon_S(x)}(f*\mu)(d)$$
with $\varUpsilon_S(x):=\{z\in \mathbb Z^+: \exists \ y\in S, y<x, z|y\}$.
Then $\alpha_f(x)=\beta_f(x)$, where
$$
\beta_f(x):=\sum\limits_{J\subseteq G_S(x)}(-1)^{|J|}f(\gcd(J\cup\{x\})).
$$
\end{lemma}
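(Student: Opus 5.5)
The plan is to prove $\alpha_f=\beta_f$ by a Möbius/inclusion--exclusion argument on the divisor lattice of $x$. First I rewrite $\beta_f$ in terms of $g:=f*\mu$. Möbius inversion gives $f(n)=\sum_{d|n}g(d)$. For any $J\subseteq G_S(x)$, the common divisors of the elements of $J\cup\{x\}$ are exactly the divisors of $\gcd(J\cup\{x\})$. Hence
\begin{align*}
\beta_f(x)=\sum_{J\subseteq G_S(x)}(-1)^{|J|}\sum_{d|x,\ d|y\ \forall y\in J}g(d)
=\sum_{d|x}g(d)\sum_{J\subseteq G_S(x,d)}(-1)^{|J|},
\end{align*}
where $G_S(x,d):=\{y\in G_S(x): d|y\}$. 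The inner sum is $1$ if $G_S(x,d)=\emptyset$ and $0$ otherwise. Therefore
\[
\beta_f(x)=\sum_{d|x,\ d\nmid y\ \forall y\in G_S(x)}g(d).
\]

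It remains to show that, for $d|x$, we have $d\in\varUpsilon_S(x)$ if and only if $d$ divides some element of $G_S(x)$.

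The direction ``if'' is immediate, since every element of $G_S(x)$ lies in $S$ and is smaller than $x$.

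For ``only if'', suppose $d|y$ with $y\in S$ and $y<x$. Gcd-closedness gives $z:=(x,y)\in S$. Moreover $z|x$, $d|z$ and $z\le y<x$. Now take any maximal chain in $S$ from $z$ up to $x$. This is possible because $M_S(z,x)$ is finite and $x$ is its largest element. The element of this chain just below $x$ is then a greatest-type divisor $w\in G_S(x)$ with $z|w$, so $d|w$.

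I expect no real obstacle here. The only point needing care is this last step: producing a greatest-type divisor of $x$ above $z$. It relies on $z$ actually belonging to $S$, which is exactly where gcd-closedness is used, and on the finiteness of $S$ to guarantee a maximal element of $\{u\in S: z|u|x, u<x\}$. One should also note the degenerate case $G_S(x)=\emptyset$ ($x$ is the minimum of $S$). There both sides reduce to $\sum_{d|x}g(d)=f(x)$, consistent with the formula since only $J=\emptyset$ contributes.
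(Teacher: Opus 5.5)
The paper gives no proof of this statement: it is quoted from Hong's 2004 paper (Theorem~1.2 there) and then only applied, with $f=\xi_a$ in Lemma~\ref{lemma 3.7} and with $f=\delta$ on the sets $S_{ij}$ in Lemma~\ref{lemma 2.9}. So there is no in-paper argument to compare against, and your proposal is a correct, self-contained proof.

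Writing $f=(f*\mu)*1$ and interchanging the sums turns $\beta_f(x)$ into the sum of $(f*\mu)(d)$ over those $d\mid x$ that divide no element of $G_S(x)$. This works because $\sum_{J\subseteq G_S(x,d)}(-1)^{|J|}=(1-1)^{|G_S(x,d)|}$, which is just inclusion--exclusion over the divisor sets of the greatest-type divisors.

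You also correctly identify that index set with $\{d\mid x: d\notin\varUpsilon_S(x)\}$. You replace $y$ by $(x,y)\in S$ and then climb to an element of $G_S(x)$. A maximal chain works for this, and so does simply taking the numerically largest element of $\{u\in S:(x,y)\mid u\mid x,\ u<x\}$.

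You are right that this is the only place gcd-closedness enters, and the hypothesis cannot be dropped. For $S=\{2,3\}$ and $x=3$ one has $G_S(3)=\emptyset$ but $1\in\varUpsilon_S(3)$. Hence $\alpha_f(3)=f(3)-f(1)$, which differs from $f(3)=\beta_f(3)$ in general.
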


The arithmetic function $\delta$ is defined for any positive integer $x$ by
$$
\delta(x):=\left\{
\begin{aligned}
&1\ \hbox{if}\ x=1,\\
&0\ \hbox{if}\ x>1.
\end{aligned}
\right.
$$
For any integers $i$ and $j$ with $1\le i,j\le n$, we define
two sets $S_{ij}$ and $G_{S}^{(i)}(x_j)$ as follows:
$$S_{ij}:=\Big\{\frac{x_t}{x_i}: x_t\in S, x_i|x_t, x_t\le x_j\Big\}$$
and
$$G_{S}^{(i)}(x_j):=\{y\in G_{S}(x_j): x_i \mid y\}.$$
It is easy to check that if $S$ is gcd closed, then $S_{ij}$ is gcd closed
when $x_i\le x_j$ and $S_{ij}$ is empty when $x_i>x_j$.
\begin{lemma}\label{lemma 2.9}
Let $S$ be a gcd-closed set and let $c_{ij}$ be defined
as in {\rm(\ref{eq3.1})}.

{\rm (i).} If $x_i\nmid x_j$, then $c_{ij} = 0$.
	
{\rm (ii).} If $x_i=x_j$, then $c_{ij}=1$.
	
{\rm (iii).} If $x_i \mid x_j$, $x_i \neq x_j$,
then
$$c_{ij}=\sum\limits_{J\subseteq G_{S_{ij}}\big(\frac{x_j}{x_i}\big)}(-1)^{|J|}
\delta\Big(\gcd\Big(J\cup\Big\{\frac{x_j}{x_i}\Big\}\Big)\Big)
=\sum_{\bar{J}\subseteq G_{S}^{(i)}(x_j)}(-1)^{|\bar{J}|}
\delta\Big(\frac{\gcd(\bar{J}\cup \{x_j\})}{x_i}\Big).$$
\end{lemma}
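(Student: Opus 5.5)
Part (i) follows directly from the definition \eqref{eq3.1}: if $x_i\nmid x_j$, no $d$ satisfies $dx_i\mid x_j$, so the sum is empty and $c_{ij}=0$. For part (ii), if $x_i=x_j$, then the only $d$ with $dx_i\mid x_j$ is $d=1$. The side condition "$x_i\nmid x_t$ for all $x_t<x_j$" holds because any $x_t<x_j=x_i$ is not divisible by $x_i$. Hence $c_{ij}=\mu(1)=1$.

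For part (iii), the idea is to recognize $c_{ij}$ as an instance of Lemma \ref{lemma 2.8} applied to the gcd-closed set $S_{ij}$ with $f=\delta$. Note that $\delta*\mu=\mu$. Write $y:=x_j/x_i\in S_{ij}$. By definition, $c_{ij}=\sum\mu(d)$ over $d\mid y$ such that $dx_i\nmid x_t$ for every $x_t\in S$ with $x_t<x_j$. The first step is to prove that this side condition is equivalent to $d\notin\varUpsilon_{S_{ij}}(y)$.

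Suppose $dx_i\mid x_t$ with $x_t<x_j$. Then $x_i\mid x_t$, so $x_t/x_i\in S_{ij}$, $x_t/x_i<y$, and $d\mid x_t/x_i$; hence $d\in\varUpsilon_{S_{ij}}(y)$. Conversely, if $d\mid u<y$ with $u=x_t/x_i\in S_{ij}$, then $dx_i\mid x_t$ and $x_t<x_j$. Therefore $c_{ij}=\alpha_\delta(y)$ computed in $S_{ij}$. Since $S_{ij}$ is gcd closed, Lemma \ref{lemma 2.8} gives $c_{ij}=\beta_\delta(y)$, which is the first displayed expression.

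The second expression comes from the bijection $z\mapsto z/x_i$ from $G_S^{(i)}(x_j)$ onto $G_{S_{ij}}(y)$. This is the only step needing care, and I expect it to be the main (though mild) obstacle. Take $z\in G_S(x_j)$ with $x_i\mid z$. Then $z/x_i\in S_{ij}$ and $z/x_i\mid y$. If $z/x_i\mid w\mid y$ with $w=x_t/x_i\in S_{ij}$, then $z\mid x_t\mid x_j$, so $x_t\in\{z,x_j\}$ and $w\in\{z/x_i,y\}$. Hence $z/x_i\in G_{S_{ij}}(y)$. The reverse inclusion is the same argument in the other direction: an element of $S$ lying between $z$ and $x_j$ in divisibility is automatically divisible by $x_i$, so it lies in $S_{ij}$ after division.

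Finally, for $\bar J\subseteq G_S^{(i)}(x_j)$ with image $J$, we have $\gcd(J\cup\{y\})=\gcd(\bar J\cup\{x_j\})/x_i$, since every element of $\bar J\cup\{x_j\}$ is divisible by $x_i$. Also $|J|=|\bar J|$. Substituting these into the first expression yields the second.
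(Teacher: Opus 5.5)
Your proposal is correct and follows essentially the same route as the paper: (i) and (ii) come straight from the definition, and (iii) comes from identifying $c_{ij}$ with $\alpha_\delta(x_j/x_i)$ on the gcd-closed set $S_{ij}$ (using $\delta*\mu=\mu$), applying Lemma \ref{lemma 2.8}, and then transporting the sum through $G_{S_{ij}}(x_j/x_i)=\frac{1}{x_i}G_S^{(i)}(x_j)$. You also supply two verifications the paper only asserts: that the side condition is equivalent to $d\notin\varUpsilon_{S_{ij}}(x_j/x_i)$, and that $z\mapsto z/x_i$ is a bijection between these sets of greatest-type divisors.
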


\begin{proof}
{\rm (i).}  From the definition of $c_{ij}$ as in
\eqref{eq3.1}, we deduce that if $x_i \nmid x_j$, then $c_{ij} = 0$.
Part (i) is proved.
	
{\rm (ii).}  If $x_i=x_j$, we can easily deduce that
$$c_{jj}=\sum\limits_{dx_j \mid x_j \atop dx_j
\nmid x_t, x_t < x_j}\mu(d)=\mu(1)=1$$
as required. Part (ii) is proved.

{\rm (iii).} If $x_i \mid x_j$ and $x_i \neq x_j$,
we can deduce from \eqref{eq3.1} that
\begin{equation}\label{eq3.12}
c_{ij}=\sum\limits_{dx_i \mid x_j,dx_i \nmid x_t
\atop x_t < x_j}\mu(d) =\sum\limits_{d \mid \frac{x_j}{x_i},
d \nmid \frac{x_t}{x_i} \atop x_i \mid x_t, x_t < x_j}\mu(d).
\end{equation}
In what follows, we show that $S_{ij}$ is a gcd-closed set.
For any $\frac{x_{t_1}}{x_i}, \frac{x_{t_2}}{x_i}\in S_{ij}$,
we have $x_{t_1},x_{t_2}\in S$, $x_i\mid x_{t_1}, x_i\mid x_{t_2}$,
$x_{t_1}\le x_j$ and $x_{t_2}\le x_j$. Then $x_i\mid (x_{t_1},x_{t_2})$
and $(x_{t_1},x_{t_2})\le x_{t_1}\le x_j$. Since $S$ is gcd closed, $(x_{t_1},x_{t_2})\in S$.
It infers that
$$\Big(\frac{x_{t_1}}{x_i}, \frac{x_{t_2}}{x_i}\Big)=\frac{(x_{t_1},x_{t_2})}{x_i}\in S_{ij}.$$
Hence $S_{ij}$ is gcd closed. Then by \eqref{eq3.12}, Lemma \ref{lemma 2.8}
and the definition of $S_{ij}$, we deduce that
\begin{align}
c_{ij}=&\sum _{\begin{tiny}\begin{array}{c}d|\frac{x_j}{x_i}, d\nmid \frac{x_t}{x_i}\notag\\
\frac{x_t}{x_i}\in S_{ij}, x_t<x_j \end{array}\end{tiny}}\mu(d)\notag\\
=&\sum _{\begin{tiny}\begin{array}{c}d|\frac{x_j}{x_i}, d\nmid \frac{x_t}{x_i}\notag\\
\frac{x_t}{x_i}\in S_{ij}, x_t<x_j \end{array}\end{tiny}}\delta*\mu(d)\notag\\
=&\sum _{\begin{tiny}\begin{array}{c}d|\frac{x_j}{x_i}, d\nmid \frac{x_t}{x_i}\notag\\
\frac{x_t}{x_i}\in S_{ij}, \frac{x_t}{x_i}<\frac{x_j}{x_i}\end{array}\end{tiny}}\delta*\mu(d)\notag\\
=&\alpha_{\delta}\Big(\frac{x_j}{x_i}\Big)\notag\\=&\sum_{J\subseteq G_{S_{ij}}(\frac{x_j}{x_i})}(-1)^{|J|}
\delta(\gcd\Big(J\cup \Big\{\frac{x_j}{x_i}\Big\}\Big)).\label{i}
\end{align}
The first formula is proved.

Now we show the second one. We can easily see that if $x_i\mid x_j$, then
\begin{align*}
G_{S_{ij}}\Big(\frac{x_j}{x_i}\Big)=\frac{1}{x_i}G_{S}^{(i)}(x_j).
\end{align*}
So for any subset $J\subseteq G_{S_{ij}}\big(\frac{x_j}{x_i}\big)$, one can write
$J=\frac{1}{x_i}\bar{J}$ with $\bar{J}\subseteq G_{S}^{(i)}(x_j)$.
It then follows from \eqref{i} that
\begin{align*}
c_{ij}=\sum_{\bar{J}\subseteq G_{S}^{(i)}(x_j)}(-1)^{|\bar{J}|}\delta\Big(\frac{\gcd(\bar{J}\cup \{x_j\})}{x_i}\Big)
\end{align*}
as required.

This finishes the proof of Lemma \ref{lemma 2.9}.
\end{proof}

\begin{lemma}\label{lemma 2.10}
Let $S$ be a gcd-closed set satisfying the condition
$\mathcal G$ and $x_m \in S$. Then for any integer $r$
with $1\le r\le m$, one has
\begin{align*}
c_{rm}=\left\{\begin{array}{cl}
1 &\hbox{if}\ r=m,\\
(-1)^k &\hbox{if} \ r=m_{i_1\cdots i_k} \ \hbox{for \ some} \
1\leq i_1<\cdots<i_k\leq s \ \hbox{and} \ 1 \leq k \leq s,\\
0 &\hbox{otherwise}.
\end{array}\right.
\end{align*}
\end{lemma}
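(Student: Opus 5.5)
We will compute $c_{rm}$ through the second formula of Lemma \ref{lemma 2.9}(iii), namely
$$c_{rm}=\sum_{\bar J\subseteq G_S^{(r)}(x_m)}(-1)^{|\bar J|}\delta\Big(\frac{\gcd(\bar J\cup\{x_m\})}{x_r}\Big).$$

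\emph{Easy cases.} If $r=m$, then $c_{mm}=1$ by Lemma \ref{lemma 2.9}(ii). If $x_r\nmid x_m$, then $c_{rm}=0$ by Lemma \ref{lemma 2.9}(i). So assume from now on that $x_r\mid x_m$ and $x_r<x_m$.

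\emph{Which terms survive.} Write $G_S^{(r)}(x_m)=\{x_{m_i}: i\in I\}$, where $I:=\{i\in\langle s\rangle: x_r\mid x_{m_i}\}$. Note $I\neq\emptyset$, because some greatest-type divisor of $x_m$ lies above $x_r$. For $\bar J=\{x_{m_i}:i\in J\}$ with $J\subseteq I$, the gcd is $x_m$ if $J=\emptyset$ and $x_{m_J}$ (in the paper's notation $x_{m_{i_1\cdots i_k}}$) otherwise. The $\delta$-factor is therefore nonzero exactly when $J\ne\emptyset$ and $x_{m_J}=x_r$.

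By Lemma \ref{lemma 2.3}, $x_{m_J}=x_m/\prod_{i\in J}P_i$, and each $P_i>1$. Hence distinct index sets $J$ give distinct values $x_{m_J}$: if $J\ne J'$, the products $\prod_{i\in J}P_i$ and $\prod_{i\in J'}P_i$ differ, because the $P_i$ are pairwise coprime by Lemma \ref{lemma 2.2} and all exceed $1$. So at most one $J$ contributes, and
$$c_{rm}=(-1)^{|J|}\ \text{ if } x_r=x_{m_J}\text{ for some nonempty } J\subseteq\langle s\rangle,\qquad c_{rm}=0\ \text{ otherwise}.$$
If $x_r=x_{m_J}$ for some $J\subseteq\langle s\rangle$, then automatically $J\subseteq I$, since $x_r=x_{m_J}$ divides each $x_{m_i}$ with $i\in J$. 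This gives exactly the three cases of the statement, with $k=|J|$. The same injectivity shows that the label $r=m_{i_1\cdots i_k}$ is well defined, so the sign $(-1)^k$ is unambiguous.

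\emph{Main obstacle.} The delicate point is justifying that distinct subsets give distinct gcds; I expect this step to need the most care. I would prove it via Lemma \ref{lemma 2.3} together with the coprimality in Lemma \ref{lemma 2.2}, as above. A fallback is to argue directly that if $J\ne J'$ with $i\in J\setminus J'$, then $x_{m_{J'}}\nmid x_{m_J}$. Indeed, by Lemma \ref{lemma 2.3} we have $x_{m_J}\cdot\prod_{t\in J}P_t=x_{m_{J'}}\cdot\prod_{t\in J'}P_t$, so equality $x_{m_J}=x_{m_{J'}}$ would force equal products of pairwise coprime integers greater than $1$, hence $J=J'$.
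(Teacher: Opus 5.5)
Your proof is correct and follows essentially the paper's route through Lemma~\ref{lemma 2.9}(iii). The differences are that you use the second formula uniformly, and you make explicit the injectivity of $J\mapsto x_{m_J}$ (via Lemmas~\ref{lemma 2.3} and~\ref{lemma 2.2}); the paper uses that injectivity only tacitly, when it describes $G_{S_{rm}}(x_m/x_r)$ and the values of $\delta$.

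In the remaining case, your justification is actually the sound one: every $\gcd(\bar J\cup\{x_m\})$ lies in $\{x_m\}\cup\{x_{m_J}\}$, so it is a multiple of $x_r$ different from $x_r$. By contrast, the paper's intermediate claim $x_r<x_{m_{1\cdots s}}$ fails in general. For example, with $S=\{1,2,3,4,6,12\}$, $x_m=12$ and $x_r=3$, we have $x_{m_{1\cdots s}}=(4,6)=2<3$.
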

\begin{proof}
From the definition of $c_{ij}$ as in $(\ref{eq3.1})$,
one knows that if $x_r \nmid x_m$, then $c_{rm} = 0$.
In what follows we let $x_r \mid x_m$.
	
Let $r=m$. Then we can easily deduce that
$$c_{mm}=\sum\limits_{dx_m|x_m}\mu(d)=\mu(1)=1$$
as desired.

Let $x_r = x_{m_{i_1\cdots i_k}}$ with $1 \leq i_1< \cdots < i_k \leq s$
and $1 \leq k \leq s$. It is clear that
$$G_{S_{m_{i_1\cdots i_k},m}}\Big(\frac{x_m}{x_{m_{i_1\cdots i_k}}}\Big)
=\Big\{\frac{x_{m_{i_1}}}{x_{m_{i_1\cdots i_k}}},
\cdots, \frac{x_{m_{i_k}}}{x_{m_{i_1\cdots i_k}}}\Big\}$$
and
\begin{align*}
\delta\Big(\gcd\Big(J\cup\Big\{\frac{x_m}{x_{m_{i_1\cdots i_k}}}\Big\}\Big)\Big)
=\left\{\begin{aligned}
1\ \hbox{if}\ J=G_{S_{m_{i_1\cdots i_k},m}}\Big(\frac{x_m}
{x_{m_{i_1\cdots i_k}}}\Big),\\
0\ \hbox{if}\ J\subsetneq G_{S_{m_{i_1\cdots i_k},m}}
\Big(\frac{x_m}{x_{m_{i_1\cdots i_k}}}\Big).
\end{aligned}
\right.
\end{align*}
Hence by Lemma \ref{lemma 2.9} (iii), we have $c_{m_{i_1\cdots i_k}m}=(-1)^k$.
			
Now we treat with the case that $x_r \mid x_m$ with $r \neq m, m_{i_1\cdots i_k}$,
where  $1 \leq i_1< \cdots < i_k \leq s$ and $1 \leq k \leq s$.
For any subset $\bar{J}$ of $G_{S}^{(r)}(x_m)$,
we have $x_{m_{1\cdots s}}\mid \gcd(\bar{J}\cup \{x_m\})$ and
$x_r\mid \gcd(\bar{J}\cup \{x_m\})$. But
$$x_r\notin\{x_m\}\cup \{x_{m_{i_1\cdots i_k}}: 1\le k\le s, 1\le i_1<\cdots <i_k\le s\}.$$
Hence
$$x_r<x_{m_{1\cdots s}}\le \gcd(\bar{J}\cup \{x_m\})$$
and so $\frac{\gcd(\bar{J}\cup \{x_m\})}{x_r}>1$. This implies that
for any subset $\bar{J}$ of $G_{S}^{(r)}(x_m)$, we have
$\delta\big(\frac{\gcd(\bar{J}\cup \{x_m\})}{x_r}\big)=0$.
So by Lemma \ref{lemma 2.9} (iii), we have
\begin{align*}
c_{rm}=\sum\limits_{ \bar{J} \subseteq G_{S}^{(r)}(x_m)}(-1)^{|\bar{J}|}
\delta\Big(\frac{\gcd(\bar{J}\cup \{x_m\})}{x_{r}}\Big)
=\sum\limits_{ \bar{J} \subseteq G_{S}^{(r)}(x_m)}(-1)^{|\bar{J}|}\times0
=0
\end{align*}
as desired.

This finished the proof of Lemma \ref{lemma 2.10}.
\end{proof}
		
\begin{lemma}{\rm\cite{[ZY-2025]}}\label{lemma 3.6}
Let $S$ be a gcd-closed set satisfying the condition
$\mathcal G$ and let $x_m\in S$ be such that
$\{x_{m_1},\cdots,x_{m_s}\}\subseteq G_S(x_m)$.
For any real number $e$, let
$$
\beta_{\xi_e}(x_m):=x_m^e+\sum\limits_{t=1}^s(-1)^t\sum
\limits_{1\le i_1<\cdots<i_t\le s}x^e_{m_{i_1\cdots i_t}}.
$$
Then
\begin{align*}
\beta_{\xi_e}(x_m)=x^e_{m_{1\cdots s}}
\prod_{j=1}^s\Big(\Big(\frac{x_m}{x_{m_j}}\Big)^e-1\Big).
\end{align*}
\end{lemma}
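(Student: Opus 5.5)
The identity is purely arithmetic and follows by expanding the product on the right and matching terms using Lemma \ref{lemma 2.3}. Write $P_j=\frac{x_m}{x_{m_j}}$ as in Section 2; by hypothesis the $x_{m_j}$ are distinct greatest-type divisors of $x_m$. I will work with the elements $x_{m_1},\cdots,x_{m_s}$ themselves, so Lemma \ref{lemma 2.3} applies to any subfamily. (If only a subset of $G_S(x_m)$ is listed, the proof of Lemma \ref{lemma 2.3} in \cite{[ZY-2025]} uses only that the chosen elements lie in $G_S(x_m)$ and that $S$ satisfies $\mathcal G$, so the same identity holds for them.)

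\textbf{Step 1.} Apply Lemma \ref{lemma 2.3} to every subset $\{i_1,\cdots,i_t\}\subseteq\langle s\rangle$ to get
$$x_{m_{i_1\cdots i_t}}=\frac{x_m}{P_{i_1}\cdots P_{i_t}}.$$
In particular, taking the full set gives $x_m=x_{m_{1\cdots s}}P_1\cdots P_s$. Combining the two,
$$x_{m_{i_1\cdots i_t}}=x_{m_{1\cdots s}}\prod_{j\notin\{i_1,\cdots,i_t\}}P_j.$$

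\textbf{Step 2.} Raise this to the $e$th power; this is valid for any real $e$ because all quantities are positive. Substituting into the definition of $\beta_{\xi_e}(x_m)$, and treating the term $x_m^e$ as the case $t=0$, gives
$$\beta_{\xi_e}(x_m)=x_{m_{1\cdots s}}^e\sum_{T\subseteq\langle s\rangle}(-1)^{|T|}\prod_{j\in\langle s\rangle\setminus T}P_j^e .$$

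\textbf{Step 3.} Set $U=\langle s\rangle\setminus T$ and note that $(-1)^{|T|}=(-1)^{s-|U|}$. The sum becomes
$$\sum_{U\subseteq\langle s\rangle}\prod_{j\in U}P_j^e\prod_{j\notin U}(-1)=\prod_{j=1}^s\bigl(P_j^e-1\bigr),$$
which is the expansion of the product by distributivity. This is exactly the claimed formula.

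\textbf{Main obstacle.} The only substantive input is Step 1, namely that for \emph{every} subfamily the gcd equals $x_m$ divided by the product of the corresponding $P_j$. This is the content of Lemma \ref{lemma 2.3}, which in turn rests on the pairwise coprimality of the $P_j$ (Lemma \ref{lemma 2.2}) and the condition $\mathcal G$. Once it is granted, the rest is bookkeeping of signs in the inclusion–exclusion expansion.
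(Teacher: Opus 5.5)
Your proof is correct. The paper gives no proof of its own here: it imports the statement from Zhu--Yu. Your argument is the natural one. You substitute $x_{m_{i_1\cdots i_t}}=x_{m_{1\cdots s}}\prod_{j\notin\{i_1,\cdots,i_t\}}P_j$ from Lemma \ref{lemma 2.3} into the alternating sum and recognize the expansion of $\prod_{j=1}^s(P_j^e-1)$. This matches how the paper itself uses Lemma \ref{lemma 2.3}, for example $x_{m_{1\cdots k}}=x_{m_{1\cdots s}}P_{k+1}\cdots P_s$ in the proof of Lemma \ref{lemma 2.5}.

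Your parenthetical hedge about the subset case, which appeals to how Zhu--Yu prove Lemma \ref{lemma 2.3}, is unnecessary. Lemma \ref{lemma 2.3} is already stated for arbitrary index families $i_1<\cdots<i_k$, so listing $G_S(x_m)$ with the given elements first covers it. In any case, Lemma \ref{lemma 2.3} follows directly from the paper's Lemma \ref{lemma 2.2} via
$\gcd(x_m/P_{i_1},\cdots,x_m/P_{i_k})=x_m/[P_{i_1},\cdots,P_{i_k}]=x_m/(P_{i_1}\cdots P_{i_k})$.
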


\begin{lemma}\label{lemma 3.7}
Let $S$ be a gcd-closed set satisfies the condition
$\mathcal G$ with $x_m\in S$ and
$G_S(x_m)=\{x_{m_1},\cdots,x_{m_s}\}$.
Let $\alpha_{\xi_a}(x_m)$ be defined as in \eqref{eq 1.1}.
Then
$$\alpha_{\xi_a}(x_m)
=x_{m_{1\cdots s}}^a \prod_{j=1}^s\Big(\Big(\frac{x_m}
{x_{m_j}}\Big)^a-1\Big).$$
\end{lemma}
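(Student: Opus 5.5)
The plan is to combine Hong's reduction with Lemma \ref{lemma 3.6}. By Lemma \ref{lemma 2.8} with $f=\xi_a$, we have
$$\alpha_{\xi_a}(x_m)=\beta_{\xi_a}(x_m)=\sum_{J\subseteq G_S(x_m)}(-1)^{|J|}\big(\gcd(J\cup\{x_m\})\big)^a .$$
First we rewrite each term of this sum in the notation of Section 2. For $J=\emptyset$ the term is $x_m^a$. For a nonempty $J=\{x_{m_{i_1}},\cdots,x_{m_{i_t}}\}$ with $1\le i_1<\cdots<i_t\le s$, every element of $J$ divides $x_m$, so
$$\gcd(J\cup\{x_m\})=(x_{m_{i_1}},\cdots,x_{m_{i_t}})=x_{m_{i_1\cdots i_t}},$$
and the sign is $(-1)^t$. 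Since $G_S(x_m)=\{x_{m_1},\cdots,x_{m_s}\}$ exactly, the subsets $J$ correspond bijectively to index sets $\{i_1<\cdots<i_t\}\subseteq\langle s\rangle$.

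Hence
$$\alpha_{\xi_a}(x_m)=x_m^a+\sum_{t=1}^s(-1)^t\sum_{1\le i_1<\cdots<i_t\le s}x^a_{m_{i_1\cdots i_t}},$$
which is precisely the quantity $\beta_{\xi_e}(x_m)$ of Lemma \ref{lemma 3.6} with $e=a$. Applying Lemma \ref{lemma 3.6} with $e=a$ and the full set $G_S(x_m)$ in place of the subset $\{x_{m_1},\cdots,x_{m_s}\}$ then gives the claimed product formula
$$x_{m_{1\cdots s}}^a\prod_{j=1}^s\big((x_m/x_{m_j})^a-1\big).$$

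Two points need a separate check. When $s=0$, i.e.\ $G_S(x_m)=\emptyset$, the set $S$ has no element below $x_m$ dividing it. We interpret $x_{m_{1\cdots s}}$ as $x_m$ and the product as empty, so both sides equal $x_m^a$: Lemma \ref{lemma 2.8} gives only the term $J=\emptyset$. When $s=1$, both sides equal $x_m^a-x_{m_1}^a$, which is immediate.

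The only point where care is needed is confirming that Lemma \ref{lemma 2.8} applies to $\alpha_{\xi_a}$ as defined in \eqref{eq 1.1}. The condition ``$d\nmid x_t$ for all $x_t<x_k$'' is the same as $d\notin\varUpsilon_S(x_k)$, so the two definitions agree. Beyond that, the argument is a direct substitution of Lemmas \ref{lemma 2.8} and \ref{lemma 3.6}, and I do not expect a genuine obstacle.
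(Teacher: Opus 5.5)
Your proposal is correct and follows the paper's proof: apply Lemma \ref{lemma 2.8} with $f=\xi_a$ to get $\alpha_{\xi_a}(x_m)=\beta_{\xi_a}(x_m)$, recognize this sum as the expression of Lemma \ref{lemma 3.6} for the full set $G_S(x_m)$, and read off the product formula. Your extra checks simply make explicit what the paper leaves implicit: that \eqref{eq 1.1} agrees with the $d\notin\varUpsilon_S(x_m)$ formulation, that $\gcd(J\cup\{x_m\})=x_{m_{i_1\cdots i_t}}$, and the degenerate cases $s=0,1$.
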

\begin{proof}
With Lemma \ref{lemma 2.8} applied to $f=\xi_a$,
by Lemma \ref{lemma 3.6}, we obtain that
\begin{align*}
\alpha_{\xi_a}(x_m)=\beta_{\xi_a}(x_m)
=&x^a_{m_{1\cdots s}}\prod_{j=1}^s
\Big(\Big(\frac{x_m}{x_{m_j}}\Big)^a-1\Big)
\end{align*}
as required. So Lemma \ref{lemma 3.7} is proved.
\end{proof}
		
\section{ Proof of Theorem \ref{theorem 1.3}}

For any integers $l$ and $m$ with $1\le l,m\le |S|$,
we define the two-variable functions $f$ and $g$ as follows:
\begin{align}\label{eq 4.1}
f(l,m):=\frac{1}{\alpha_{\xi_a}(x_m)}\sum_{x_r\mid x_m}c_{rm}(x_l,x_r)^b
\end{align}
and
\begin{align}\label{eq 4.2}
g(l,m):=\frac{1}{\alpha_{\xi_a}(x_m)}\sum_{x_r\mid x_m}c_{rm}[x_l,x_r]^b,
\end{align}
where $ \alpha_{\xi_a}(x_m)$ is defined as in \eqref{eq 1.1}.
Before giving the proof of Theorem \ref{theorem 1.3},
we show the following result.
		
\begin{lemma}\label{lemma 4.2}
Let $S$ be a gcd-closed set satisfying the condition $\mathcal{G}$ with $x_l,x_m\in S$,
and let $G_S(x_m)=\{x_{m_1},\cdots, x_{m_s}\}$ with $s\ge 1$. Let $a$ and $b$ be
positive integers. Then
$$f(l,m)=\left\{\begin{aligned}
&x_{m_{1\cdots s}}^{b-a}\prod_{j=1}^s
\frac{\big(\frac{x_m}{x_{m_j}}\big)^b-1}
{\big(\frac{x_m}{x_{m_j}}\big)^a-1}
&\ \hbox{if}\ x_m\mid x_l,\\
&0&\ \hbox{otherwise}
\end{aligned}\right.$$
and
$$g(l,m)=\left\{\begin{aligned}
&x_{m_{1\cdots s}}^{b-a}\Big(\frac{x_l}
{(x_l,x_m)}\Big)^b\prod_{j=1}^s
\frac{\big(\frac{x_m}{x_{m_j}}\big)^b-1}
{\big(\frac{x_m}{x_{m_j}}\big)^a-1}
&\ \hbox{if}\ (x_l,x_m)|x_{m_{1\cdots s}},\\
&0&\ \hbox{otherwise}.
\end{aligned}\right.$$
\end{lemma}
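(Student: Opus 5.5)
By Lemma \ref{lemma 2.10}, the only indices $r$ contributing to the sums in \eqref{eq 4.1} and \eqref{eq 4.2} are $r=m$, with $c_{mm}=1$, and $r=m_{i_1\cdots i_k}$, with $c_{rm}=(-1)^k$. Hence
$$f(l,m)\alpha_{\xi_a}(x_m)=(x_l,x_m)^b+\sum_{k=1}^s(-1)^k\sum_{1\le i_1<\cdots<i_k\le s}(x_l,x_{m_{i_1\cdots i_k}})^b,$$
and the analogous identity holds for $g$ with lcm's in place of gcd's. Lemma \ref{lemma 3.7} gives the denominator $\alpha_{\xi_a}(x_m)=x_{m_{1\cdots s}}^a\prod_j(P_j^a-1)$. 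It therefore suffices to evaluate these inclusion–exclusion sums. The plan is to show that each is either identically zero, by pairing terms, or equals a $\beta_{\xi_b}$-type expression evaluated by Lemma \ref{lemma 3.6}.

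For the case split, set $d:=(x_l,x_m)$, reorder $G_S(x_m)$, and let $k$ be such that $d\mid x_{m_j}$ exactly for $j\le k$.

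\emph{Case $k=s$.} Then $d\mid x_{m_{1\cdots s}}$, and every $(x_l,x_{m_{I}})=d$. The gcd sum is then $d^b\sum_{J}(-1)^{|J|}=0$. In this case $x_m\nmid x_l$, except when $d=x_m$, which is impossible here since $x_m\nmid x_{m_j}$. For the lcm sum, write $[x_l,x_{m_I}]=x_lx_{m_I}/d$. The sum becomes $(x_l/d)^b\beta_{\xi_b}(x_m)$, and Lemma \ref{lemma 3.6} gives $(x_l/d)^b x_{m_{1\cdots s}}^b\prod_j(P_j^b-1)$. Dividing by $\alpha_{\xi_a}(x_m)$ yields the stated formula for $g$.

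\emph{Case $k=0$, i.e.\ $d\nmid x_{m_j}$ for all $j$.} If $d=x_m$, i.e.\ $x_m\mid x_l$, then $(x_l,x_{m_I})=x_{m_I}$, so the gcd sum is $\beta_{\xi_b}(x_m)$, and Lemma \ref{lemma 3.6} gives the formula for $f$. The lcm sum in this subcase equals $x_l^b\sum_J(-1)^{|J|}=0$. This agrees with the statement, since $d=x_m\nmid x_{m_{1\cdots s}}$.

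If $d<x_m$, pick $y\in G_S(x_m)$ with $d\mid y$. This contradicts $k=0$, so this subcase cannot occur. Hence $k=0$ forces $d=x_m$, because $d\in S$ by gcd-closedness and any proper divisor of $x_m$ in $S$ divides some greatest-type divisor.

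\emph{Case $1\le k\le s-1$.} This is where Lemma \ref{lemma 2.6} is used. Split each index set $I$ as $I=H\cup T$ with $H\subseteq\{1,\dots,k\}$ and $T\subseteq\{k+1,\dots,s\}$, including $H=\emptyset$ (which corresponds to $x_m$ itself) and $T=\emptyset$.

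For the lcm sum, Lemma \ref{lemma 2.6}(ii),(iii) show that $[x_l,x_{m_{H\cup T}}]$ does not depend on $T$. Summing over $T$ gives $\sum_T(-1)^{|T|}=0$ since $s-k\ge1$, so $g=0$. This matches the statement, since $d\nmid x_{m_{1\cdots s}}$ because $d\nmid x_{m_s}$.

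For the gcd sum, Lemma \ref{lemma 2.6}(i),(iv) show that $(x_l,x_{m_{H\cup T}})$ does not depend on $H$. Summing over $H\subseteq\{1,\dots,k\}$ gives a factor $0$ since $k\ge1$, so $f=0$. This also matches, since $x_m\nmid x_l$ here.

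The main obstacle is bookkeeping at the boundary subsets. One must check that the formulas of Lemma \ref{lemma 2.6} extend to $H=\emptyset$ or $T=\emptyset$, where $x_{m_\emptyset}$ is read as $x_m$. Part (i) with $H=\emptyset$ is trivial, and part (ii) is exactly the $H=\emptyset$ instance of part (iii). Similarly, part (iv) with $T=\emptyset$ reduces to part (i). With these conventions in place, the factorizations into $\sum_H\sum_T$ go through cleanly.
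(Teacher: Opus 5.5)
Your proposal is correct and follows the paper's proof essentially step for step. Your cases $k=s$, $k=0$ and $1\le k\le s-1$ are the paper's Cases 1, 2 and 3, evaluated with the same Lemmas \ref{lemma 2.10}, \ref{lemma 3.6}, \ref{lemma 3.7}, and your $\sum_H\sum_T$ factorization via Lemma \ref{lemma 2.6}(i)--(iv) is a regrouping of the paper's $C_1,C_2,D_1,D_2$ computation; you also justify why $k=0$ forces $(x_l,x_m)=x_m$, which the paper only asserts.
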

		
\begin{proof}
Since $G_S(x_m) = \{x_{m_{1}}, \cdots,x_{m_{s}}\}$,
by Lemma \ref{lemma 2.10}, \eqref{eq 4.1} and
\eqref{eq 4.2}, we have
\begin{align}\label{eq 4.3}
\alpha_{\xi_a}(x_m)f(l,m)=&\sum_{x_r\mid x_m}c_{rm}
(x_l,x_r)^b\notag\\
=&(x_l,x_m)^b+\sum\limits_{t=1}^s(-1)^t
\sum\limits_{1\le j_{1}<\cdots<j_{t}\le s}
(x_l,x_{m_{j_{1}\cdots j_{t}}})^b
\end{align}
and
\begin{align}\label{eq 4.4}
\alpha_{\xi_a}(x_m)g(l,m)=&\sum_{x_r\mid x_m}
c_{rm}[x_l,x_r]^b\notag\\
=&[x_l,x_m]^b+\sum\limits_{t=1}^s(-1)^t
\sum\limits_{1\le j_{1}<\cdots<j_{t}\le s}
[x_l,x_{m_{j_{1}\cdots j_{t}}}]^b.
\end{align}
Since $(x_l,x_m)|x_m$, it follows that either
$(x_l,x_m)|x_{m_{1\cdots s}}$, or $(x_l,x_m)=x_m $,
or there exist an integer $k$ with $1\le k\le s-1$
and $k$ integers $i_1,\cdots,i_k$ with
$1\le i_{1}<\cdots<i_{k}\le s$
such that $(x_l,x_m)|x_{m_{i_1\cdots i_k}}$
and $(x_l,x_m) \nmid x_{m_j} $ for all
$j \in\langle s\rangle \setminus\{i_{1},\cdots,i_{k}\}$.
So we only need to consider the following three cases.
			
{\sc Case 1.} $(x_l,x_m)|x_{m_{1\cdots s}}$. For
$1\le j_1<\cdots <j_t \le s$ with $1\le t\le s$,
since $x_{m_{1\cdots s}}| x_{m_{j_1\cdots j_t}}|x_m$,
we have $(x_l,x_m)|x_{m_{j_1\cdots j_t}}|x_m$ and
\begin{align}\label{eq 4.5}
(x_l,x_{m_{j_1\cdots j_t}})
=(x_l,(x_{m_{j_1\cdots j_t}}, x_m))
=((x_l,x_m),x_{m_{j_1\cdots j_t}})
=(x_l,x_m).
\end{align}
Then by \eqref{eq 4.3}, we have
\begin{align*}
\alpha_{\xi_a}(x_m)f(l,m)
=&(x_l,x_m)^b+\sum\limits_{t=1}^s(-1)^t\sum\limits_{1\le j_{1}<\cdots<j_{t}\le s}
(x_l,x_{m_{j_{1}\cdots j_{t}}})^b\\
=&(x_l,x_m)^b+\sum\limits_{t=1}^s(-1)^t\sum\limits_{1\le j_{1}<\cdots<j_{t}\le s}(x_l,x_m)^b\\
=&(x_l,x_m)^b\Big(1+\sum\limits_{t=1}^s(-1)^t\sum\limits_{1\le j_{1}<\cdots<j_{t}\le s}1\Big)\\
=&(x_l,x_m)^b\Big(1+\sum_{t=1}^s(-1)^t\binom{s}{t}\Big)\\
=&(x_l,x_m)^b(1-1)^s
=0.
\end{align*}
But $\alpha_{\xi_a}(x_m)\ne 0$. So $f(l,m)=0$.

On the other hand, since $S$ satisfies the condition $\mathcal G$, by Lemmas \ref{lemma 2.10} and
\ref{lemma 3.6}, identities \eqref{eq 4.4} and \eqref{eq 4.5}, we have
\begin{align*}
\alpha_{\xi_a}(x_m)g(l,m)
=&[x_l,x_m]^b+\sum\limits_{t=1}^s(-1)^t
\sum\limits_{1\le j_{1}<\cdots<j_{t}\le s}
[x_l,x_{m_{j_{1}\cdots j_{t}}}]^b\\
=&\dfrac{x_l^bx_m^b}{(x_l,x_m)^b}+\sum\limits_{t=1}^s(-1)^t
\sum\limits_{1\le j_{1}<\cdots<j_{t}\le s}
\dfrac{x_l^bx^b_{m_{j_1\cdots j_t}}}{(x_l,x_{m_{j_1\cdots j_t}})^b}\\
=&\dfrac{x_l^bx_m^b}{(x_l,x_m)^b} +\sum\limits_{t=1}^s(-1)^t
\sum\limits_{1\le j_{1}<\cdots<j_{t}\le s}
\dfrac{x_l^bx^b_{m_{j_1\cdots j_t}}}{(x_l,x_m)^b}\\
=&\dfrac{x_l^bx_m^b}{(x_l,x_m)^b}+\dfrac{x_l^b}{(x_l,x_m)^b}
\sum\limits_{t=1}^s(-1)^t\sum\limits_{1\le j_{1}<\cdots<j_{t}\le s}x^b_{m_{j_1\cdots j_t}}\\
=&\dfrac{x_l^b}{(x_l,x_m)^b}\Big(x_m^b+\sum\limits_{t=1}^s(-1)^t
\sum\limits_{1\le j_{1}<\cdots<j_{t}\le s}
x_{m_{j_1\cdots j_t}}^b\Big)\\				
=&\dfrac{x_l^b}{(x_l,x_m)^b}\beta_{\xi_b}(x_m)\\
=&\dfrac{x_l^bx_{m_{1\cdots s}}^b}{(x_l,x_m)^b}\prod_{j=1}^s
\Big(\Big(\frac{x_m}{x_{m_j}}\Big)^b-1\Big).
\end{align*}
			
Then applying Lemma \ref{lemma 3.7}, we obtain that
\begin{align*}
g(l,m)=& \frac{1}{\alpha_{\xi_a}(x_m)}\dfrac{x_l^bx_{m_{1\cdots s}}^b}{(x_l,x_m)^b}\prod_{j=1}^s
\Big(\Big(\frac{x_m}{x_{m_j}}\Big)^b-1\Big)\\
=& x_{m_{1\cdots s}}^{b-a}\Big(\frac{x_l}{(x_l,x_m)}\Big)^b\prod_{j=1}^s
\frac{\big(\frac{x_m}{x_{m_j}}\big)^b-1}{\big(\frac{x_m}{x_{m_j}}\big)^a-1}
\end{align*}
as expected.
		
{\sc Case 2.} $(x_l,x_m)=x_m$. Then $x_m|x_l$ and
so $[x_l,x_m]=x_l$. Clearly, for $1\le j_1<\cdots<j_t\le s$,
we have $x_{m_{j_1\cdots j_t}}|x_m|x_l$.
This infers that $(x_l, x_{m_{j_1\cdots j_t}})
=x_{m_{j_1\cdots j_t}}$ and $[x_l, x_{m_{j_1\cdots j_t}}]=x_l$.
Since $S$ satisfies the condition $\mathcal G$,
by \eqref{eq 4.3}, \eqref{eq 4.4}
and Lemma \ref{lemma 3.6}, we have
\begin{align*}
\alpha_{\xi_a}(x_m)f(l,m)=&\sum_{x_r\mid x_m}c_{rm}(x_l,x_r)^b\\
=&x_m^b +\sum\limits_{t=1}^s(-1)^t
\sum\limits_{1\le j_{1}<\cdots<j_{t}\le s}
(x_l, x_{m_{j_1\cdots j_t}})^b\\
=&x_m^b +\sum\limits_{t=1}^s(-1)^t
\sum\limits_{1\le j_{1}<\cdots<j_{t}\le s}
x_{m_{j_1\cdots j_t}}^b\nonumber\\
=&\beta_{\xi_b}(x_m)\\
=&x_{m_{1\cdots s}}^b\prod_{j=1}^s\Big(\Big(\frac{x_m}{x_{m_j}}\Big)^b-1\Big)
\end{align*}
and
\begin{align*}
\alpha_{\xi_a}(x_m)g(l,m)=&\sum_{x_r\mid x_m}c_{rm}[x_l,x_r]^b\\
=&x_l^b +\sum\limits_{t=1}^s(-1)^t
\sum\limits_{1\le j_{1}<\cdots<j_{t}\le s}
[x_l, x_{m_{j_1\cdots j_t}}]^b\nonumber\\
=&x_l^b +\sum\limits_{t=1}^s(-1)^t
\sum\limits_{1\le j_{1}<\cdots<j_{t}\le s}x_l^b\\
=&x_l^b\Big(1+\sum\limits_{t=1}^s(-1)^t
\sum\limits_{1\le j_{1}<\cdots<j_{t}\le s}1\Big)\nonumber\\
=&x_l^b(1-1)^s
=0.
\end{align*}
Hence by Lemma \ref{lemma 3.7}, one has
$$f(l,m)=\frac{1}
{\alpha_{\xi_a}(x_m)}x_{m_{1\cdots s}}^b\prod_{j=1}^s
\Big(\Big(\frac{x_m}{x_{m_j}}\Big)^b-1\Big)
=x_{m_{1\cdots s}}^{b-a}\prod_{j=1}^s
\frac{\big(\frac{x_m}{x_{m_j}}\big)^b-1}
{\big(\frac{x_m}{x_{m_j}}\big)^a-1}$$
and $$g(l,m)=\frac{1}{\alpha_{\xi_a}(x_m)}
\sum\limits_{x_r \mid x_m}c_{rm}[x_l,x_r]^b=0$$
as expected.
			
{\sc Case 3.} There exists an integer $k$ with $1\le k\le s-1$
and $k$ integers $i_1,\cdots, i_k$ with $1\le i_{1 }<\cdots<i_{k}\le s$
such that $(x_l,x_m)|x_{m_{i_1\cdots i_k}}$
and $(x_l,x_m) \nmid x_{m_j} $ for all
$j \in\langle s\rangle\setminus\{i_{1},\cdots,i_{k}\}$.
WLOG, one may let $(x_l,x_m)  \mid x_{m_{1\cdots k}}$
and $(x_l,x_m) \nmid x_{m_j} $ for all $k+1\le j\le s$.
Applying Lemma \ref{lemma 2.10} gives us that
\begin{small}
\begin{align}\label{eq 4.6}
&\sum_{x_r\mid x_m}c_{rm}(x_l,x_r)^b \nonumber\\
=&(x_l,x_m)^b+\sum_{h=1}^{k}(-1)^{h}\sum
\limits_{1\le i_1<\cdots<i_h\le k}(x_l,x_{m_{i_1\cdots i_h}})^b+\sum_{t=1}^{s-k}(-1)^{t}\sum
\limits_{k+1 \le j_1<\cdots <j_t \le s}
(x_l,x_{m_{j_1\cdots j_t}})^b\nonumber\\
&+\sum_{h=1}^{k}\sum_{t=1}^{s-k}(-1)^{h+t}
\sum\limits_{1 \le i_1<\cdots<i_h\le k
\atop k+1 \le j_1<\cdots <j_t \le s}
(x_l,x_{m_{i_1\cdots i_hj_1\cdots j_t}})^b\nonumber\\
=&C_1+C_2
\end{align}
\end{small}
and
\begin{small}
\begin{align}\label{eq 4.7}
&\sum_{x_r\mid x_m}c_{rm}[x_l,x_r]^b \nonumber\\
=&[x_l,x_m]^b+\sum_{t=1}^{s-k}(-1)^{t}\sum\limits_{k+1 \le j_1<\cdots <j_t \le s}[x_l,x_{m_{j_1\cdots j_t}}]^b+ \sum_{h=1}^{k}(-1)^{b}\sum\limits_{1 \le i_1<\cdots<i_h\le k}[x_l,x_{m_{i_1\cdots i_h}}]^b \nonumber\\
&+\sum_{h=1}^{k}\sum_{t=1}^{s-k}(-1)^{h+t}\sum\limits_{1 \le i_1<\cdots<i_h\le k \atop k+1\le j_1<\cdots <j_t \le s} [x_l,x_{m_{i_1\cdots i_hj_1\cdots j_t}}]^b\nonumber\\
=&D_1+D_2,
\end{align}
\end{small}
where
\begin{equation*}
C_1:=(x_l,x_m)^b+\sum_{h=1}^{k}(-1)^{h}\sum\limits_{1 \le i_1<\cdots<i_h\le k}(x_l,x_{m_{i_1\cdots i_h}})^b,
\end{equation*}
\begin{align*}
C_2:=&\sum_{t=1}^{s-k}(-1)^{t}\sum\limits_{k+1 \le j_1<\cdots <j_t \le s}(x_l,x_{m_{j_1\cdots j_t}})^b\notag\\
&+\sum_{h=1}^{k}\sum_{t=1}^{s-k}(-1)^{h+t}\sum\limits_{1 \le i_1<\cdots<i_h\le k \atop k+1 \le j_1<\cdots <j_t \le s} (x_l,x_{m_{i_1\cdots i_hj_1\cdots j_t}})^b,
\end{align*}
\begin{equation*}
D_1:=[x_l,x_m]^b+\sum_{t=1}^{s-k}(-1)^{t}\sum\limits_{k+1 \le j_1<\cdots <j_t \le s}[x_l,x_{m_{j_1\cdots j_t}}]^b
\end{equation*}
and
\begin{align*}
D_2:=&\sum_{h=1}^{k}(-1)^{h}\sum\limits_{1 \le i_1<\cdots<i_h\le k}[x_l,x_{m_{i_1\cdots i_h}}]^b\notag\\
&+\sum_{h=1}^{k}\sum_{t=1}^{s-k}(-1)^{h+t}\sum\limits_{1 \le i_1<\cdots<i_h\le k\atop k+1\le j_1 <\cdots <j_t \le s} [x_l,x_{m_{i_1\cdots i_hj_1\cdots j_t}}]^b.
\end{align*}
			
In the following we show that $C_1=C_2=D_1=D_2=0$.
First of all, we calculate $C_1$. By Lemma \ref{lemma 2.6} (i), we have
\begin{align}\label{eq 4.13}
C_1=&(x_l,x_m)^b+\sum_{h=1}^{k}(-1)^{h}\sum\limits_{1\le i_1<\cdots<i_h\le k}
(x_l,x_{m_{i_1\cdots i_h}})^b\notag\\
=&(x_l,x_m)^b+\sum_{h=1}^{k}(-1)^{h}\sum\limits_{1\le i_1<\cdots<i_h\le k}
(x_l,x_m)^b\notag\\
=&(x_l,x_m)^b\Big(1+\sum_{h=1}^{k}(-1)^{h}
\sum\limits_{1 \le i_1<\cdots<i_h\le k}1\Big)\notag\\
=&(x_l,x_m)^b\Big(1+\sum_{h=1}^{k}(-1)^{h}
\binom{k}{h}\Big)\notag\\
=&(x_l,x_m)^b(1-1)^k
=0
\end{align}
as desired.

Subsequently, we compute $C_2$. It follows from Lemma
\ref{lemma 2.6} (iv) that
\begin{align}\label{eq 4.19}
C_2=&\sum_{t=1}^{s-k}(-1)^{t}\sum\limits_{k+1 \le j_1<\cdots <j_t \le s}(x_l,x_{m_{j_1\cdots j_t}})^b\notag\\
&+\sum_{h=1}^{k}\sum_{t=1}^{s-k}(-1)^{h+t}
\sum\limits_{1\le i_1<\cdots<i_h\le k \atop k+1 \le j_1<\cdots <j_t\le s} (x_l,x_{m_{i_1\cdots i_hj_1\cdots j_t}})^b\notag\\
=&\sum_{t=1}^{s-k}(-1)^{t}\sum\limits_{k+1\le j_1<\cdots <j_t \le s}
(x_l,x_{m_{j_1\cdots j_t}})^b\notag\\
&+\sum_{h=1}^{k}\sum_{t=1}^{s-k}(-1)^{h+t}
\sum\limits_{1 \le i_1<\cdots<i_h\le k \atop k+1 \le j_1<\cdots <j_t \le s} (x_l,x_{m_{j_1\cdots j_t}})^b\notag\\
=&\sum_{t=1}^{s-k}(-1)^{t}\sum\limits_{k+1\le j_1<\cdots <j_t \le s}
(x_l,x_{m_{j_1\cdots j_t}})^b\Big(1+\sum_{h=1}^k(-1)^h
\sum\limits_{1\le i_1<\cdots<i_h\le k} 1\Big)\notag\\
=&\sum_{t=1}^{s-k}(-1)^{t}\sum\limits_{k+1\le j_1<\cdots <j_t \le s}
(x_l,x_{m_{j_1\cdots j_t}})^b\Big(1+\sum_{h=1}^k(-1)^h\binom{k}{h}\Big)\notag\\
=&\sum_{t=1}^{s-k}(-1)^{t}\sum\limits_{k+1\le j_1<\cdots <j_t \le s}
(x_l,x_{m_{j_1\cdots j_t}})^b(1-1)^k
=0
\end{align}
as required.

Therefore, by \eqref{eq 4.1}, \eqref{eq 4.6}, \eqref{eq 4.13}
and \eqref{eq 4.19}, we obtain that
$$f(l,m)=\dfrac{1}{\alpha_{\xi_a}(x_m)}
\sum\limits_{x_r \mid x_m}c_{rm}(x_l,x_r)^b=0.$$

Now we compute $D_1$. By Lemma \ref{lemma 2.6} (ii),
we deduce that
\begin{align}\label{eq 4.15}
D_1&=[x_l,x_m]^b+\sum_{t=1}^{s-k}(-1)^{t}
\sum\limits_{k+1\le j_1<\cdots <j_t \le s}
[x_l,x_{m_{j_1\cdots j_t}}]^b\notag\\
&=[x_l,x_m]^b+\sum_{t=1}^{s-k}(-1)^{t}
\sum\limits_{k+1\le j_1<\cdots<j_t\le s}[x_l,x_{m}]^b\notag\\
&= [x_l,x_m]^b\Big(1+\sum_{t=1}^{s-k}(-1)^{t}
\sum\limits_{k+1\le j_1<\cdots<j_t\le s} 1\Big)\notag\\
&= [x_l,x_m]^b\Big(1+\sum_{t=1}^{s-k}(-1)^{t}
\binom{s-k}{t}\Big)\notag\\
&=[x_l,x_m]^b\sum_{t=0}^{s-k}(-1)^{t}\binom{s-k}{t}\notag\\
&=[x_l,x_m]^b(1-1)^{s-k}
=0.
\end{align}

Consequently, we calculate $D_2$. By Lemma
\ref{lemma 2.6} (iii), we have
\begin{align}\label{eq 4.20}
D_2=&\sum_{h=1}^{k}(-1)^{h}\sum\limits_{1 \le i_1<\cdots<i_h\le k}
[x_l,x_{m_{i_1\cdots i_h}}]^b\notag\\
&+\sum_{h=1}^{k}\sum_{t=1}^{s-k}(-1)^{h+t}\sum\limits_{1\le i_1<\cdots<i_h\le k \atop k+1\le j_1<\cdots <j_t\le s}[x_l,x_{m_{i_1\cdots i_hj_1\cdots j_t}}]^b
\nonumber\\
=&\sum_{h=1}^{k}(-1)^{h}\sum\limits_{1 \le i_1<\cdots<i_h\le k}[x_l,x_{m_{i_1\cdots i_h}}]^b\notag\\
&+\sum_{h=1}^{k}\sum_{t=1}^{s-k}(-1)^{h+t}
\sum\limits_{1 \le i_1<\cdots<i_h\le k
\atop k+1\le j_1<\cdots <j_t \le s}
[x_l,x_{m_{i_1\cdots i_h}}]^b\nonumber\\
=&\sum_{h=1}^{k}(-1)^{h}\sum\limits_{1\le i_1<\cdots<i_h\le k}[x_l,x_{m_{i_1\cdots i_h}}]^b
\Big(1+\sum_{t=1}^{s-k}(-1)^{t}\sum\limits_{k+1\le j_1<\cdots <j_t \le s} 1 \Big)\nonumber\\
=&\sum_{h=1}^{k}(-1)^{h}\sum\limits_{1 \le i_1<\cdots<i_h\le k}[x_l,x_{m_{i_1\cdots i_h}}]^b
\Big(1+\sum_{t=1}^{s-k}(-1)^{t}\binom{s-k}{t}\Big)\nonumber\\
=&\sum_{h=1}^{k}(-1)^{h}\sum\limits_{1 \le i_1<\cdots<i_h\le k}[x_l,x_{m_{i_1\cdots i_h}}]^b(1-1)^{s-k}
=0.
\end{align}
It then follows from \eqref{eq 4.2}, \eqref{eq 4.7},
\eqref{eq 4.15} and \eqref{eq 4.20} that
$$g(l,m) =\dfrac{1}{\alpha_{\xi_a}(x_m)}
\sum\limits_{x_r \mid x_m}c_{rm}[x_l,x_r]^b=0.$$
So Lemma \ref{lemma 4.2} is proved.
\end{proof}
		
Finally, we present the proof of Theorem \ref{theorem 1.3}
as the conclusion of this paper.

\begin{proof}[Proof of Theorem \ref{theorem 1.3}]
Let $S$ be a gcd-closed set which satisfies the condition
$\mathcal{G}$. For any integers $i$ and $j$ with
$1\le i,j\le |S|$, by Lemma \ref{lemma 2.7} we have
\begin{align*}
{\left( (S^b)(S^a)^{-1}\right)}_{ij}
=&\sum_{r=1}^{|S|}(x_i,x_r)^b\sum\limits_{x_r\mid x_t\atop x_j\mid x_t} \dfrac{c_{rt}c_{jt}}{\alpha_{\xi_a}(x_t)}\\
=&\sum_{x_j\mid x_t}\dfrac{c_{jt}}{\alpha_{\xi_a}(x_t)}
\sum_{x_r\mid x_t} c_{rt}(x_i,x_r)^b\\
=&\sum_{x_j \mid x_t}c_{jt}f(i,t)
\end{align*}
and
\begin{align*}
{\left( [S^b](S^a)^{-1}\right)}_{ij}
=&\sum_{r=1}^{|S|}[x_i,x_r]^b\sum\limits_{x_r\mid x_t\atop x_j\mid x_t}
\dfrac{c_{rt}c_{jt}}{\alpha_{\xi_a}(x_t)}\\
=&\sum_{x_j\mid x_t}\dfrac{c_{jt}}{\alpha_{\xi_a}(x_t)}
\sum_{x_r\mid x_t} c_{rt}[x_i,x_r]^b\\
=&\sum_{x_j \mid x_t}c_{jt}g(i,t),
\end{align*}
where $f(i,t)$ and $g(i,t)$ are defined as in \eqref{eq 4.1}
and \eqref{eq 4.2}, respectively.
		
In what follows, we prove that
$${\left((S^b)(S^a)^{-1}\right)}_{ij}\in\mathbb{Z}$$
and
$${\left([S^b](S^a)^{-1}\right)}_{ij}\in \mathbb{Z}$$
for all integers $i$ and $j$ with $ 1 \le i,j \le |S|$. By the definition of
$c_{jt}$, we know that each $c_{jt}\in \mathbb{Z}$. So, in order to finish
the proof of Theorem \ref{theorem 1.3}, it is sufficient to show that
$f(i,t)\in \mathbb{Z}$ and $g(i,t)\in \mathbb{Z}$ for all $1\leq i,t\leq |S|$.
This will be done in what follows.
			
For any given integers $i$ and $t$ with $1\le i,t\le |S|$,
we consider the following two cases.
			
{\sc Case 1.} $t=1$. Then $x_t=x_1$. Since $S$ is
gcd closed, one has $x_1=\gcd(S)$. By \eqref{eq 1.1}, one has
$\alpha_{\xi_a}(x_1)=x_1^a$. Since $a|b$, from (\ref{eq 4.1}),
(\ref{eq 4.2}) and Lemma \ref{lemma 2.10}, we derive that
\begin{align*}
f(i,1)=\frac{1}{\alpha_{\xi_a}(x_1)}\sum_{x_r|x_1}c_{r1}(x_i,x_r)^b
=\frac{c_{11}(x_i,x_1)^b}{x_1^a}=\frac{x_1^b}{x_1^a}
=x_1^{b-a}\in \mathbb Z
\end{align*}
and
\begin{equation*}
g(i,1) = \dfrac{1}{\alpha_{\xi_a}(x_1)}\sum_{x_r\mid x_1} c_{r1}[x_i,x_r]^b
=\frac{c_{11}[x_i,x_1]^b}{x_1^a}=\frac{x_i^b}{x_1^a}=x_i^{b-a}\Big(\frac{x_i}{x_1}\Big)^a\in\mathbb Z		
\end{equation*}
as expected.

{\sc Case 2.} $t\ge 2$. Then $|G_S(x_t)|\ge 1$. Write
$G_S(x_t):=\{x_{t_1},\cdots, x_{t_s}\}$ with $s\ge 1$.
Since $a\mid b$ and $S$ satisfies the condition $\mathcal{G}$,
by Lemma \ref{lemma 4.2}, we have
$$f(i,t)=\left\{\begin{aligned}
&x_{t_{1\cdots s}}^{b-a}\prod_{j=1}^s
\frac{\big(\frac{x_t}{x_{t_j}}\big)^b-1}
{\big(\frac{x_t}{x_{t_j}}\big)^a-1}\in\mathbb Z
&\ \hbox{if}\ x_t\mid x_i,\\
&0&\ \hbox{otherwise}
\end{aligned}\right.$$
and
$$g(i,t)=\left\{\begin{aligned}
&x_{t_{1\cdots s}}^{b-a}\Big(\frac{x_i}
{(x_i,x_t)}\Big)^b\prod_{j=1}^s
\frac{\big(\frac{x_t}{x_{t_j}}\big)^b-1}
{\big(\frac{x_t}{x_{t_j}}\big)^a-1}\in\mathbb Z
&\ \hbox{if}\ (x_i,x_t)|x_{t_{1\cdots s}},\\
&0&\ \hbox{otherwise}.
\end{aligned}
\right.$$
Then the desired result that $f(i,t)\in\mathbb{Z}$
and $g(i,t)\in\mathbb{Z}$ follows immediately.
			
This concludes the proof of Theorem \ref{theorem 1.3}.
\end{proof}

\begin{center}
{\bf Acknowledgements}
\end{center}
The author would like to thank the editor and the anonymous referee
for their careful reading of the paper and very helpful suggestions,
comments and corrections that improve greatly the presentation of
this paper.

	\end{document}